\newtheorem{thm}{Theorem}[section]
\newtheorem{cor}[thm]{Corollary}
\newtheorem{lem}[thm]{Lemma}
\newtheorem{prop}[thm]{Proposition}
\newtheorem{defn}[thm]{Definition}
\theoremstyle{remark}
\def\sph{\mathbb{S}^{d-1}}
\def\f{\frac}
 \def\a{{\alpha}}
 \def\b{{\beta}}
 \def\g{{\gamma}}
 \def\k{{\kappa}}
 \def\t{{\theta}}
 \def\l{{\lambda}}
 \def\d{{\delta}}
 \def\o{{\omega}}
 \def\s{{\sigma}}
 \def\la{{\langle}}
 \def\ra{{\rangle}}
 \def\tb{{\mathbf t}}
 \def\CF{{\mathcal F}}
 \def\CH{{\mathcal H}}
 \def\CP{{\mathcal P}}
 \def\CS{{\mathcal S}}
 \def\CT{{\mathcal T}}
 \def\CC{{\mathbb C}}
 \def\NN{{\mathbb N}}
 \def\RR{{\mathbb R}}
 \def\ZZ{{\mathbb Z}}
        \def\proj{\operatorname{proj}}
 \def\dd{\mathrm{d}}
 \def\ii{\mathrm{i}}
 \def\ee{\mathrm{e}}
\def\f{\frac}
\begin{document}

\title[Intertwining operator associated to symmetric groups]
{Intertwining operator associated to symmetric groups and summability on the unit sphere}
 
\author{Yuan Xu}
\address{Department of Mathematics\\ University of Oregon\\
    Eugene, Oregon 97403-1222.}\email{yuan@uoregon.edu}

\date{\today}
\keywords{Intertwining operator, symmetric group, Dunkl operators, $h$-spherical harmonics,
summability}
\subjclass[2010]{33C52, 42C05. Secondary 42B08, 44A30}

\begin{abstract} 
An integral representation of the intertwining operator for the Dunkl operators associated with symmetric 
groups is derived for the class of functions of a single component. The expression provides a closed
form formula for the reproducing kernels of $h$-harmonics associated with symmetric groups when one 
of the components is a coordinate vector. The latter allows us to establish a sharp result for the Ces\`aro 
summability of $h$-harmonic series on the unit sphere. 
\end{abstract} 

\maketitle

\section{Introduction}
\setcounter{equation}{0}

Associate to a reflection group $G$, the Dunkl operators are a family of commuting first order differential-difference 
operators that act on smooth functions on $\RR^d$ \cite{D89}. In the case that $G = S_d$, the symmetric group
of $d$ elements, these operators are defined by 
\begin{equation}\label{eq:dunkl}
 D_i f(x)  =\frac{\partial}{\partial x_i} f(x) + \k  \sum_{ \substack{ j=1\\j \ne i }}^d \frac{f(x) - f(x(i,j))}{x_i-x_j},
  \qquad 1 \le i \le d,
\end{equation}
where $\k$ is a non-negative real number and $(i,j)$ denotes the transposition of exchanging $i$th and $j$th elements. 
The Dunkl operators commute in the sense that $D_iD_j = D_jD_i$ for $1\le  i, j \le d$. A linear operator, 
denote by $V_\k$, is called an intertwining operator if it satisfies the relations \cite{D91}
\begin{equation}\label{eq:intertw}
 D_iV_\k =V_\k \partial_i, \qquad 1 \le i \le d. 
\end{equation}
This operator is uniquely determined if it also satisfies $V_\k1 = 1$ and $V_\k: \CP_n^d \mapsto \CP_n^d$, 
where $\CP_n^d$ is the space of homogeneous polynomial of degree $n$ in $d$ variables. 

The commuting property of the Dunkl operators leads to the definition of an analogue of the Laplace operator, 
$\Delta_\k = \sum_{i=1}^d D_i^2$. While the Laplace operator is essential for analysis in $L^2(\RR^d)$, the
operator $\Delta_\k$ plays its role in $L^2(h_\k^2; \RR^d)$, where $h_\k$ is a function invariant 
under the reflection group $G$. In the case of $G = S_d$, the weight function $h_\k$ is defined by 
\begin{equation}\label{eq:hk}
  h_\k(x) = \prod_{1\le i < j \le d} |x_i - x_j|^{\k}, \qquad x \in \RR^d, \quad \k \ge 0.
\end{equation}
In particular, a homogeneous polynomial $Y$ in $d$ variables is called an $h$-harmonic if $\Delta_\k Y =0$. 
The restriction of $h$-harmonics on the unit sphere $\sph$, called spherical $h$-harmonics, are orthogonal.
More precisely, let $\CH_n^d(h_\k^2)$ be the space of $h$-harmonic polynomials of degree exactly $n$. Then 
$h$-harmonics of different degrees are orthogonal: for $Y_n \in \CH_n^d(h_\k^2)$ and $Y_m \in \CH_m^d(h_\k^2)$, 
$$
    \int_{\sph} Y_n(x) Y_m(x) h_\k^2(x) \dd\s(x) = 0, \qquad n \ne m,
$$
where $d\s$ is the surface measure. The theory of spherical $h$-harmonics resembles that of ordinary spherical harmonics. 
In particular,  
$$
   \dim(n,d):= \dim \CH_n^d(h_\k^2) = \binom{n+d-1}{n} - \binom{n+d-3}{n-2}. 
$$
The reproducing kernel $P_n(h_\k; \cdot,\cdot)$ of the space $\CH_n^d(h_\k^2)$ enjoys an addition formula 
given in terms of the intertwining operator $V_\k$. Let $\{Y_{n,\ell}: 1 \le \ell \le \dim(n,d)\}$ be an orthonormal basis 
of $\CH_n^d(h_\k^2)$. Then the kernel  $P_n(h_\k^2)$ satisfies
\begin{equation}\label{eq:Pn-kernel}
   P_n(h_\k^2; x,y) = \sum_{\ell=1}^{\dim(n,d)} Y_{n,\ell}(x) Y_{n,\ell}(y). 
\end{equation}
The addition formula of the kernel is given by \cite{X97} 
\begin{equation}\label{eq:addition}
   P_n(h_\k^2;x,y) = V_\k \left[Z_n^{\l_\k} (\la \cdot ,y\ra) \right](x), \qquad x,y \in \sph,
\end{equation}
where $Z_n^\l$ is given in terms of the classical Gegenbauer polynomial $C_n^\l$ by 
\begin{equation}\label{eq:Zn}
   Z_n^\l(t) = \frac{n+\l}{\l} C_n^\l(t), \qquad -1 \le t \le 1, 
\end{equation}
and $\l_\k$ is a constant that is given by, when $G = S_d$, 
\begin{equation}\label{eq:lambdak}
     \l_\k:= \binom{d}{2} \k + \frac{d-2}{2}.
\end{equation}
When $\k=0$, $V_\k = id$ and the identity \eqref{eq:addition} coincides with the addition formula of 
ordinary spherical harmonics. 

The reproducing kernel $P_n(h_\k^2)$ is the kernel of the orthogonal projection operator 
$\proj_n^\k: L^2(h_\k^2,\sph) \mapsto \CH_n^d(h_\k^2)$ and it plays a central role in the study of 
Fourier orthogonal series in spherical $h$-harmonics, which we shall call spherical $h$-harmonic series
from now on. For intrinsic properties that rely on the underlying reflection group of such series, we need 
a closed formula for the kernel, which calls for an {\it explicit} integral representation of $V_\k$. It is known 
\cite{R} that there exists a nonnegative probability measure $d\mu_x$ such that 
$V_\k f (x) = \int_{\RR^d} f(y) d\mu_x(y)$. What we need, however, is a far more explicit representation.
At the moment, such a representation is known for $G= \ZZ_2^d$ with $h_\k(x ) = \prod_{i=1}^d |x_i|^{\k_i}$, 
which allows us to carry out hard analysis and establish several fundamental results on the spherical 
$h$-harmonic series; see \cite{DaiX}.  For the symmetric group $S_3$, a version of the integral representation
was obtained in \cite{D95}, which however is not adequate for hard analysis of the spherical $h$-harmonic 
series. It should be mentioned that an integral representation of the generalized spherical functions associate 
to $S_d$ was given recently in \cite{Sa}, which is closely related to the intertwining operator. 

Our main result of the present paper is to provide a explicit integral representation for $V_\k f$ associate 
to $S_d$ when the function $f$ depends only on one component of its variables. The integral is over a 
regular simplex in $d-1$ variables and is motivated by our recent work \cite{X19}, where such an integral 
is used for a representation of $V_\k$ for the dihedral group. As an application, we obtain a closed formula 
for the reproducing kernel $P_n(h_\k^2; x, e_j)$, where $e_j$ is the $j$-th coordinate vector, which allows 
us to study the $h$-harmonic series at $e_j$. By taking an integral average over $\sph$, the Ces\`aro 
$(C,\d)$ means of $h$-harmonic series are known \cite{X97} to converge if $\d  > \l_\k$ in $L^1(h_\k^2;\sph)$
or in $C(\sph)$, but the result is not sharp since taking average over sphere removes the action of the group 
inadvertently. Using the new integral representation of $V_\k$, we shall show that the convergence holds 
if $\d > \l_\k$ is replaced by $\d > \l_\k - (d-1)\k$. 

The paper is organized as follows. The new integral representation will be stated and proved in the next 
section, where several of its consequences will also be discussed. The spherical $h$-harmonic series is 
considered in Section 3, where the convergence of the $(C,\delta)$ means at coordinate vectors is 
established, assuming a critical estimate over an integral of the Jacobi polynomial. The latter estimate is 
technical and will be carried out in Section 4. 

\section{Intertwining operator associated to symmetric groups}
\setcounter{equation}{0}

Let $V_\k$ be the intertwining operator associated to the symmetric group $S_d$. Our main result 
in this section is the following integral representation of $V_\k$. Let $T^{d-1}$ denote the simplex 
$$
T^{d-1}:= \big\{u \in \RR^{d-1} : t_1 \ge 0, \ldots, t_{d-1} \ge 0, \, t_1+\cdots + t_{d-1} \le 1\}. 
$$
Written in homogeneous coordinates of $\RR^d$, it is equivalent to the simplex 
$$
    \CT^d =\big\{(t_0,\ldots,t_{d-1}) \in \RR^d: t_i \ge 0, \quad t_0 + t_1+\cdots + t_{d-1} =1\big\}.  
$$

\begin{thm} \label{thm:VkSym}
Let $f: \RR \to \RR$. For $1 \le \ell \le d$, define $F(x_1,\ldots, x_d) = f(x_\ell)$. Let 
\begin{equation}\label{eq:Vk}
  V_\k F(x) = c_\k \int_{\CT^d} f(x_1 t_0 + x_2 t_1+ \cdots + x_d t_{d-1}) t_{\ell-1} (t_0 \ldots t_{d-1})^{\k-1} \dd t,
\end{equation}
where the constant $c_\k$ is given by
$$
c_\k = c_{\k,d} =\Gamma(d \k + 1)/(\k \Gamma(\k)^d).
$$ 
Then the operator $V_\k$ satisfies 
$$
  D_i V_\k F(x) =  V_\k (\partial_i F)(x), \qquad 1 \le i \le d.
$$
\end{thm}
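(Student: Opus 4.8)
The plan is to verify $D_i V_\k F = V_\k(\partial_i F)$ by direct computation, organizing everything around a single integration-by-parts identity on $\CT^d$. Write $u = u(t) = x_1 t_0 + \cdots + x_d t_{d-1}$ and $W(t) = (t_0 \cdots t_{d-1})^{\k-1}$, so that $V_\k F(x) = c_\k \int_{\CT^d} f(u)\, t_{\ell-1}\, W\, dt$ and $\partial u/\partial x_i = t_{i-1}$. Note that $\partial_i F$ is again a one-component function ($\partial_\ell F = f'(x_\ell)$ and $\partial_i F = 0$ for $i \ne \ell$), so the right-hand side is given by the same formula with $f$ replaced by $f'$ when $i = \ell$, and is $0$ otherwise. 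The observation driving the whole argument is that a difference quotient of $f(u)$ in the $x$-variables is a \emph{tangential} derivative in the $t$-variables: since $u$ is linear in $t$ with $\partial u/\partial t_a = x_{a+1}$,
\[
  (x_{a+1} - x_{b+1})\, f'(u) = (\partial_{t_a} - \partial_{t_b})\, f(u), \qquad 0 \le a, b \le d-1,
\]
and the direction $e_a - e_b$ is tangent to the hyperplane $t_0 + \cdots + t_{d-1} = 1$ on which $\CT^d$ lives.

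First I would isolate the key lemma: for $a \ne b$,
\[
  \int_{\CT^d} (\partial_{t_a} - \partial_{t_b})[f(u)]\, t_a t_b\, W\, dt = \k \int_{\CT^d} f(u)\,(t_a - t_b)\, W\, dt .
\]
This follows by integrating by parts along the tangential constant field $e_a - e_b$. The decisive computation is $(\partial_{t_a} - \partial_{t_b})(t_a t_b W) = \k(t_b - t_a) W$, where the exponent $\k - 1$ in $W$ is exactly what lets the $t_a^{-1}$ from differentiating $t_a^{\k-1}$ recombine into $\k t_b W$. For the boundary: since $e_a - e_b$ is tangent to each face $\{t_c = 0\}$ with $c \ne a, b$, only the faces $\{t_a = 0\}$ and $\{t_b = 0\}$ carry a flux, and there the weight $t_a t_b W$ contains the factor $t_a^{\k}$ (resp. $t_b^\k$), which vanishes because $\k > 0$. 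When $0 < \k < 1$, so that $W$ is singular along the remaining faces, I would justify the computation by first integrating over $\{t_c \ge \varepsilon \text{ for all } c\}$ and letting $\varepsilon \to 0$, the singularities $t_c^{\k-1}$ being integrable.

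Next I would unwind $D_i V_\k F$. Its differential part is $c_\k \int_{\CT^d} f'(u)\, t_{i-1} t_{\ell-1}\, W\, dt$. For the difference part, replacing $x$ by $x(i,j)$ replaces $u$ by the same linear form with $x_i, x_j$ interchanged; interchanging instead the integration variables $t_{i-1}, t_{j-1}$ (a measure-preserving involution of $\CT^d$ that fixes $W$) restores $u$ at the price of turning the factor $t_{\ell-1}$ into $t'_{\ell-1}$, where $t'$ denotes $t$ with $t_{i-1}$ and $t_{j-1}$ swapped. Hence $V_\k F(x(i,j)) = c_\k \int_{\CT^d} f(u)\, t'_{\ell-1}\, W\, dt$, and the $j$-th difference quotient equals $\frac{c_\k}{x_i - x_j}\int_{\CT^d} f(u)\,(t_{\ell-1} - t'_{\ell-1})\, W\, dt$, which is nonzero only when $\ell \in \{i, j\}$.

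Finally I would split into cases. If $i \ne \ell$, the only surviving difference term is $j = \ell$, and the key identity together with the lemma (with $a = i-1$, $b = \ell-1$) turns the differential part into $\frac{\k c_\k}{x_i - x_\ell}\int f(u)(t_{i-1} - t_{\ell-1})W\,dt$, which cancels it exactly, giving $D_i V_\k F = 0 = V_\k(\partial_i F)$. If $i = \ell$, every $j \ne \ell$ contributes; here I would use the constraint $1 - t_{\ell-1} = \sum_{j \ne \ell} t_{j-1}$ to write the differential part as $c_\k \int f'(u) t_{\ell-1} W\,dt - c_\k \sum_{j\ne\ell}\int f'(u) t_{\ell-1} t_{j-1} W\,dt$, and then apply the lemma term by term (with $a = \ell-1$, $b = j-1$) to identify the second sum with the negative of the difference part; the leftover $c_\k \int f'(u) t_{\ell-1} W\,dt$ is precisely $V_\k(\partial_\ell F)$. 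The main obstacle is the key lemma — making the tangential integration by parts rigorous and confirming that the boundary integrals vanish for every $\k > 0$; once that weighted Stokes computation is in place, the two case checks are purely formal.
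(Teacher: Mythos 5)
Your proposal is correct and follows essentially the same route as the paper: the paper also computes $V_\k F(x(i,j))$ by swapping the integration variables $t_{i-1}, t_{j-1}$, and its key identity $\k(t_0-t_j)t_0^{\k-1}t_j^{\k-1} = \frac{\partial}{\partial t_j}(t_0t_j)^\k$ on $T^{d-1}$, followed by one integration by parts, is exactly your tangential identity $(\partial_{t_a}-\partial_{t_b})(t_at_b W) = \k(t_b-t_a)W$ read in the chart $t_0 = 1-t_1-\cdots-t_{d-1}$, with the same two-case split ($i=\ell$ handled via $t_0+\cdots+t_{d-1}=1$, $i\ne\ell$ via exact cancellation of the differential and difference parts). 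The only differences are cosmetic: the paper first reduces to $\ell=1$ by symmetry, while you keep $\ell$ general and make explicit the vanishing of boundary terms (including the $\varepsilon$-regularization for $0<\k<1$), which the paper leaves implicit.
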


\begin{proof}
The constant $c_\k$ is chosen so that $V_\k 1 = 1$. By the symmetry in the formula of \eqref{eq:Vk}, 
it is sufficient to consider $\ell =1$. Let $F(x) = f(x_1)$. Exchanging the variables $t_0$ and $t_{j-1}$ 
in the integral, we see that 
$$
V_\k F (x (1j)) = c_\k \int_{\CT^d} f \left(x_1 t_0 + \cdots + x_d t_{d-1} \right) t_{j-1} 
      (t_0 \cdots t_{d-1})^{\k-1} \dd t,
$$
which leads immediately to, setting  $t_0 =1-t_1-\cdots - t_{d-1}$, 
$$  
   V_\k F(x) - V_\k F (x (1j)) = c_\k \int_{T^{d-1}} f \left(x_1 t_0 + \cdots + x_d t_{d-1} \right) (t_0-t_{j-1}) 
      (t_0 \cdots t_{d-1})^{\k-1} \dd t.  
$$
Since $\k (t_0-t_j) t_0^{\k-1} t_j^{\k-1}  = \frac{\partial}{\partial t_j} (t_0 t_j)^\k$ for $j \ge 1$, 
integration by parts gives 
\begin{align*}
   \k \frac{  V_\k F(x) - V_\k F (x (1j)) } {x_1-x_j} = & \,
        c_\k  \int_{T^{d-1}}  f' \left(x_1 t_0 + \cdots + x_d t_{d-1} \right)  t_0 t_j (t_0 \cdots t_{d-1})^{\k-1} \dd t. 
\end{align*}
Moreover, taking derivative shows 
\begin{align*}
 \frac{\partial}{\partial x_1} V_\k F(x) = &\, 
    c_\k  \int_{T^{d-1}}  f' \left(x_1 t_0 + \cdots + x_d t_{d-1} \right)  t_0^2 (t_0 \cdots t_{d-1})^{\k-1} \dd t.
 \end{align*}
Hence, adding the terms together, we obtain
\begin{align*}
 D_1 V_\k F_\k (x) = 
   c_\k  \int_{T^{d-1}}  & f' \left(x_1 t_0 + \cdots + x_d t_{d-1} \right) (t_0^2+ t_0 (t_1+\ldots t_{d-1})) \\
      & \times  (t_0 \cdots t_{d-1})^{\k-1} \dd t  = V_\k (\partial_1 F)(x)
\end{align*}
upon using $t_0 +t_1 + \cdots + t_{d-1} =1$. Furthermore, since $t_0=1-t_1-\cdots - t_{d-1}$ is symmetric in 
$t_1,\ldots, t_{d-1}$, it is easy to see that $V_\k F(x) - V_\k F (x (i,j)) =0$ for $2 \le i,j \le d$.  Moreover, 
for $i \ge 2$, 
\begin{align*}
\frac{\partial}{\partial x_i} V_\k F(x) = &\, 
   c_\k  \int_{T^{d-1}}  f' \left(x_1 t_0 + \cdots + x_d t_{d-1} \right) t_0 t_{i-1} (t_0 \cdots t_{d-1})^{\k-1} \dd t.
\end{align*}
Hence, it follows that, for $i =2,3,\ldots,d$,  
\begin{align*}
 D_i V_\k F(x)&\,  = \frac{\partial}{\partial x_i} V_\k F(x) + \k \frac{V_\k F(x) - V_\k F(x (i, 1 ))} {x_i - x_1} \\
  &\,  =  c_\k \int_{T^{d-1}} f' \left(x_1 t_0 + \cdots + x_d t_{d-1} \right)
      ( t_0 t_{i-1}  - t_0 t_{i-1})(t_0 \cdots t_{d-1})^{\k-1}    \dd t \\
    &  =0 = V_\k \partial_i F (x).
\end{align*}
Putting these together, we have complete the proof. 
\end{proof}

The integral over the simplex $\CT^d$ is also used in \cite{X19} for an integral representation of the 
intertwining operator associated to the dihedral group of $d$-regular polygon in $\RR^2$, and the integral 
representation is also given for functions that depend only on one variable. 

Although \eqref{eq:Vk} is suggestive, we do not have an integral expression for a generic function 
$f: \RR^d \to \RR$ for $d > 2$. In the case $d=2$, \eqref{eq:hk} becomes  
$$
   h_\k(x_1,x_2) = |x_1-x_2|^\k,
$$
for which we can deduce a complete integral representation for $V_\k$. This formula, stated below, can 
also be deduced from the formula for the weight function $h_{\l,\mu} (x) = |x_1|^\l |x_2|^\mu$, associated 
to the dihedral group $I_2$, by a rotation of $90^\circ$ and setting $\l = \k$ and $\mu = 0$. For the record,  
we give a proof that verifies it directly from the definition. 

\begin{thm} \label{prop:Vd=2}
For the group $S_2$ and in homogeneous coordinates $t_0 + t_1 =1$, 
\begin{equation} \label{eq:Vd=2}
  V_\k f(x_1,x_2) = c_\k \int_{\CT^2} f \left(x_1 t_0  +x_2 t_1,  x_1t_1+ x_2 t_0 \right) t_0^\k t_1^{\k-1} \dd t. 
\end{equation}
\end{thm}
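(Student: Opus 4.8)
The plan is to verify that the operator defined by \eqref{eq:Vd=2} enjoys the three properties characterizing the intertwining operator for $S_2$: the normalization $V_\k 1 = 1$, the invariance $V_\k : \CP_n^2 \to \CP_n^2$, and the commutation relations $D_i V_\k = V_\k \partial_i$ for $i = 1,2$. Parametrizing $\CT^2$ by $t_1 \in [0,1]$ with $t_0 = 1 - t_1$, the normalization reduces to the Beta integral $\int_0^1 t_0^\k t_1^{\k-1}\,\dd t = B(\k+1,\k) = \k\,\Gamma(\k)^2/\Gamma(2\k+1)$, which is exactly $1/c_\k$. Because the map $x \mapsto (x_1 t_0 + x_2 t_1,\, x_1 t_1 + x_2 t_0)$ is linear in $x$ for each fixed $t$, applying $V_\k$ to a homogeneous polynomial of degree $n$ and integrating in $t$ yields a homogeneous polynomial of the same degree, so $V_\k$ preserves $\CP_n^2$. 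The heart of the matter is the intertwining relation, which I will carry out for $i = 1$; the case $i = 2$ then follows from the symmetry $x_1 \leftrightarrow x_2$.

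Write $u = x_1 t_0 + x_2 t_1$ and $v = x_1 t_1 + x_2 t_0$, and let $\partial_1 f$, $\partial_2 f$ be the partials of $f$ in its two slots. Since $\partial u/\partial x_1 = t_0$ and $\partial v/\partial x_1 = t_1$, the chain rule gives
\[
  \partial_1 V_\k f(x) = c_\k \int_{\CT^2} \big[ (\partial_1 f)(u,v)\, t_0 + (\partial_2 f)(u,v)\, t_1 \big]\, t_0^\k t_1^{\k-1}\,\dd t .
\]
Subtracting this from $V_\k(\partial_1 f)(x) = c_\k \int_{\CT^2} (\partial_1 f)(u,v)\, t_0^\k t_1^{\k-1}\,\dd t$ and using $1 - t_0 = t_1$, the leftover is
\[
  V_\k(\partial_1 f)(x) - \partial_1 V_\k f(x) = c_\k \int_{\CT^2} \big[ (\partial_1 f)(u,v) - (\partial_2 f)(u,v) \big]\, t_0^\k t_1^\k\,\dd t .
\]
It therefore suffices to identify the right-hand side with the reflection term $\k\,[V_\k f(x) - V_\k f(x(1,2))]/(x_1 - x_2)$.

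Two observations will close the gap. First, differentiating $f(u,v)$ in $t_1$ (with $t_0 = 1 - t_1$) gives $\partial u/\partial t_1 = x_2 - x_1$ and $\partial v/\partial t_1 = x_1 - x_2$, whence $\frac{\partial}{\partial t_1} f(u,v) = (x_1 - x_2)\big[(\partial_2 f)(u,v) - (\partial_1 f)(u,v)\big]$; this rewrites the integrand above as a total $t_1$-derivative. Integrating by parts in $t_1$, the boundary terms vanish because $t_0^\k t_1^\k$ kills both endpoints for $\k > 0$, and $\frac{\partial}{\partial t_1}(t_0^\k t_1^\k) = \k\, t_0^{\k-1} t_1^{\k-1}(t_0 - t_1)$, turning the integral into $\frac{\k c_\k}{x_1 - x_2}\int_{\CT^2} f(u,v)(t_0 - t_1)\, t_0^{\k-1} t_1^{\k-1}\,\dd t$. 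Second, in $V_\k f(x(1,2))$ the integrand is $f(v,u)$ carrying the weight $t_0^\k t_1^{\k-1}$; the change of variables $t_0 \leftrightarrow t_1$ restores the pair $(u,v)$ while interchanging the weight exponents, so $V_\k f(x(1,2)) = c_\k \int_{\CT^2} f(u,v)\, t_0^{\k-1} t_1^\k\,\dd t$. Hence $V_\k f(x) - V_\k f(x(1,2)) = c_\k \int_{\CT^2} f(u,v)(t_0 - t_1)\, t_0^{\k-1} t_1^{\k-1}\,\dd t$, and dividing by $x_1 - x_2$ and multiplying by $\k$ reproduces exactly the integration-by-parts output.

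I expect the main obstacle to be the coordinated bookkeeping of this last step: one must recognize simultaneously that the residual $(\partial_1 - \partial_2)f$ left over from the chain rule is precisely a $t_1$-derivative, and that the reflection $x(1,2)$ acts on the integral---after the variable swap---only by interchanging the two weight exponents, not by altering the arguments of $f$. Once both facts are in place the two integrals coincide and cancel, establishing $D_1 V_\k f = V_\k \partial_1 f$. Convergence and the vanishing of boundary terms are automatic for $\k > 0$; the degenerate case $\k = 0$ gives $V_\k = \mathrm{id}$, for which the identity is trivial.
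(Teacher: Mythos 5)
Your proof is correct and follows essentially the same route as the paper's: both verify the intertwining relation directly from the definition, using the swap $t_0 \leftrightarrow t_1$ to identify $V_\k f(x(1,2))$, the identity $\frac{\partial}{\partial t_1}(t_0t_1)^\k = \k(t_0-t_1)t_0^{\k-1}t_1^{\k-1}$, and integration by parts in $t_1$; you merely run the integration by parts in the opposite direction (moving the $t_1$-derivative from $f$ onto the weight rather than vice versa) and match the difference quotient at the end instead of adding it at the start. Your explicit checks of $V_\k 1 = 1$ and degree preservation are a welcome addition the paper leaves implicit.
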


\begin{proof}
We verify the righthand side of \eqref{eq:Vd=2} satisfies the definition of $V_\k$. First, 
$$  
   V_\k f(x_1,x_2) - V f (x_2,x_1) = c_\k \int_{\CT^2} f \left(x_1 t_0  +x_2 t_1,  x_1t_1+ x_2 t_0 \right) 
      (t_0-t_1) t_0^{\k-1} t_1^{\k-1} \dd t.
$$
Since $\k (t_0-t_1) t_0^{\k-1} t_1^{\k-1}  = \frac{\partial}{\partial t_1} (t_0 t_1)^\k$, integration by parts gives 
\begin{align*}
   \k \frac{ V f(x_1,x_2) - V f (x_2,x_1)}{x_1-x_2} = & \,
      c_\k  \int_{\CT^2}  \partial_1 f \left(x_1 t_0  +x_2 t_1,  x_1t_1+ x_2 t_0 \right)  t_0^{\k} t_1^{\k} \dd t \\
     -  &\,  c_\k  \int_{\CT^2}  \partial_2 f \left(x_1 t_0  +x_2 t_1,  x_1t_1+ x_2 t_0 \right)  t_0^{\k} t_1^{\k} \dd t. 
\end{align*}
Taking derivative gives 
\begin{align*}
 \frac{\partial}{\partial x_1} V f(x_1,x_2) = &\, 
   c_\k  \int_{\CT^2}  \partial_1 f \left(x_1 t_0  +x_2 t_1,  x_1t_1+ x_2 t_0 \right)  t_0^{\k+1} t_1^{\k-1} \dd t \\
     +  & \, c_\k  \int_{\CT^2}  \partial_2 f \left(x_1 t_0  +x_2 t_1,  x_1t_1+ x_2 t_0 \right)  t_0^{\k} t_1^{\k} \dd t. 
\end{align*}
Hence, adding the two terms together, we obtain
\begin{align*}
 D_1 V f(x_1,x_2) =  &\, 
   c_\k  \int_{\CT^2}  \partial_1 f \left(x_1 t_0  +x_2 t_1,  x_1t_1+ x_2 t_0 \right)  t_0^{\k} t_1^{\k-1} \dd t = V(\partial_1 f)(x),
\end{align*}
where we have used $t_0^{\k} t_1^{\k} +  t_0^{\k+1} t_1^{\k-1} = t_0^{\k} t_1^{\k-1}$, which follows from
 $t_0 +t_1 =1$.  The same consideration works for $D_2 V = V\partial_2$. Notice that the denominator of the
 difference operator for $D_2$ is $x_2-x_1$ instead of $x_1-x_2$. 
\end{proof}

Let $\CH_n^d(h_\k^2)$ be the space of spherical $h$-harmonics of degree $n$. We denote by   
$e_1:=(1,0,\cdots, 0), \ldots, e_d:= (0,\ldots, 0,1)$ the standard coordinate vectors of $\RR^d$. 

\begin{prop}
For $1 \le \ell \le d$, the reproducing kernel $P_n(h_\k; \cdot,\cdot)$ of $\CH_n^d(h_\k^2)$ satisfies 
\begin{equation}\label{eq:Pn(x,e)}
  P_n(h_\k^2; x, e_\ell) = c_\k \int_{\CT^d} Z_n^{\l_\k} (x_1t_0 + \cdots + x_d t_{d-1}) t_\ell (t_0\cdots t_{d-1})^{\k-1} \dd t.
\end{equation}
\end{prop}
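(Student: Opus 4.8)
The plan is to read off the formula by specializing the addition formula \eqref{eq:addition} at the coordinate vector $y=e_\ell$ and then recognizing the resulting application of $V_\k$ as an instance of the single-component integral representation of \thmref{thm:VkSym}. First I would set $y=e_\ell$ in \eqref{eq:addition}. Since $\la x,e_\ell\ra = x_\ell$, the function to which $V_\k$ is applied becomes $x\mapsto Z_n^{\l_\k}(x_\ell)$, which depends only on the single component $x_\ell$. Writing $f=Z_n^{\l_\k}$ and $F(x)=f(x_\ell)$, the addition formula reads $P_n(h_\k^2;x,e_\ell)=V_\k F(x)$, which is exactly the situation covered by \thmref{thm:VkSym}.

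The key step, and the point I expect to carry all the content, is to justify that the integral operator of \eqref{eq:Vk} coincides with the canonical intertwining operator $V_\k$ entering \eqref{eq:addition} when both act on a polynomial in the single variable $x_\ell$. \thmref{thm:VkSym} only verifies that the integral operator satisfies the intertwining relations, so to identify it with the true $V_\k$ I would invoke the uniqueness of $V_\k$, transferred to the relevant subspace by induction. Concretely, let $\wt V_\k$ denote the integral operator of \eqref{eq:Vk} and set $W=\wt V_\k - V_\k$ on $\Pi_\ell:=\mathrm{span}\{x_\ell^k:k\ge 0\}$. Both operators fix constants (the constant $c_\k$ is chosen so that $\wt V_\k 1=1$), both send $x_\ell^k$ to a homogeneous polynomial of degree $k$, and both satisfy $D_i(\cdot)=(\cdot)\partial_i$ on $\Pi_\ell$; note $\Pi_\ell$ is invariant under each $\partial_i$, which returns either $0$ or a lower-degree element of $\Pi_\ell$. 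Hence $W1=0$, each $Wx_\ell^k$ is homogeneous of degree $k$, and $D_iWx_\ell^k=W\partial_i x_\ell^k$. Arguing by induction on $k$, the right-hand side vanishes for every $i$ (it is $0$ when $i\ne\ell$, and $k\,Wx_\ell^{k-1}=0$ when $i=\ell$), so $Wx_\ell^k\in\bigcap_i\ker D_i$. Since a nonzero homogeneous polynomial of positive degree cannot lie in the common kernel of all the Dunkl operators, equivalently since $V_\k$ is a bijection of each $\CP_k^d$, it follows that $Wx_\ell^k=0$, and therefore $\wt V_\k=V_\k$ on $\Pi_\ell$.

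With this identification in hand the conclusion is immediate: because $Z_n^{\l_\k}(x_\ell)\in\Pi_\ell$, applying \eqref{eq:Vk} to $f=Z_n^{\l_\k}$ and using linearity to pull $V_\k$ through the finite expansion of $Z_n^{\l_\k}$ gives $P_n(h_\k^2;x,e_\ell)=\wt V_\k F(x)$, which is precisely \eqref{eq:Pn(x,e)} after restricting $x$ to $\sph$. The main obstacle is the annihilator/uniqueness argument of the preceding paragraph; once that identification is granted, the remaining steps are purely formal substitutions.
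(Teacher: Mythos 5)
Your proposal is correct and follows the same route as the paper, whose entire proof of this proposition is one sentence: the identity ``follows immediately from the addition formula \eqref{eq:addition} and the integral representation \eqref{eq:Vk}.'' The only real difference is that you make explicit a step the paper treats as implicit in the phrasing of \thmref{thm:VkSym}: that theorem, as proved, only verifies that the integral operator satisfies the intertwining relations on functions of a single component, and the paper simply calls that operator $V_\k$ without arguing that it agrees with the canonical (uniquely determined) intertwining operator. Your identification argument supplies exactly that: both operators fix constants, preserve the degree of $x_\ell^k$, and intertwine on $\mathrm{span}\{x_\ell^k : k\ge 0\}$ (a subspace invariant under every $\partial_i$), so the difference $W$ satisfies $D_i W x_\ell^k = W\partial_i x_\ell^k = 0$ for all $i$ by induction on $k$, and a homogeneous polynomial of positive degree annihilated by all Dunkl operators must vanish (write it as $V_\k q$ using that $V_\k$ is a degree-preserving bijection of $\CP_k^d$; then $\partial_i q = 0$ for all $i$ forces $q=0$). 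This buys genuine rigor at modest cost: the paper's reading is the natural one an expert would make, while your version closes the logical gap between ``an operator satisfying the intertwining relations on this subspace'' and ``the intertwining operator,'' which is precisely what is needed for \eqref{eq:addition} and \eqref{eq:Vk} to be combined legitimately.
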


This follows immediately from the addition formula \eqref{eq:addition} and the integral representation 
\eqref{eq:Vk}. The identity \eqref{eq:Pn(x,e)} plays an essential role in our study in the next section. 

Another important extension from $L^2(\RR^d)$ to $L^2(\RR^d, h_\k^2)$ is an analogue of the Fourier 
transform in the Dunkl setting. For the symmetric group $S_d$, this transform is defined by \cite{D91}
$$
   \CF_\k f(x) =  c_\k' \int_{\RR^d} f(y) E_\k (x, \ii y) h_\k^2(y) \dd y, \qquad c_\k' = 
        \frac{\Gamma(\f{d}2)} {(2 \pi)^{\f{d}{2}}2^{\binom{d}{2}\k}\Gamma\big(\binom{d}{2}\k+\f{d}{2}\big)} c_\k, 
$$
where the exponential function $E_\k$ is defined by
$$
  E_\k (x,y) := V_\k \left[ \ee^{\la x,\cdot\ra} \right](y), \qquad x,y \in \CC^d. 
$$
It is known that $E_\k(x,y) := E_\k(y,x)$. Furthermore, the generalized Bessel function $K_\k = K_{\k,d}$ is defined by
$$
  K_{\k,d}(x,y) = \frac{1}{d!} \sum_{\s \in S_d} E_\k(x, y \sigma). 
$$

For the symmetric group $S_d$, the formula in Theorem \ref{thm:VkSym} gives the following:

\begin{cor}
For $\tb = (t_0,\ldots, t_{d-1}) \in \CT^d$ and $1 \le \ell \le d$,  
$$
  E_\k(e_\ell, y) =  c_\k \int_{\CT^d} \ee^{\la y, \tb\ra } t_{\ell-1} (t_0 \ldots t_{d-1})^{\k-1} \dd t.
$$
Furthermore, the generalized Bessel function satisfies $K_{\k,d}(e_\ell, y) = K_{\k,d} (e_1,y)$ and 
$$
  K_{\k,d}(e_1,y) =  \frac{1}{d} \sum_{j=1}^d E_\k (e_1, y(1,j))= \frac{c_\k}{d} \int_{\CT^d} \ee^{\la y, \tb\ra } (t_0 \ldots t_{d-1})^{\k-1} \dd t.
$$
\end{cor}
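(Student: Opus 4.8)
The plan is to read off the first identity as a direct specialization of Theorem~\ref{thm:VkSym}, and then to derive the Bessel function formulas by collapsing the symmetrization over $S_d$ using the symmetry of the integrand over $\CT^d$. For the first identity, the key observation is that $e^{\la e_\ell, z\ra} = e^{z_\ell}$ depends only on the single coordinate $z_\ell$, so it is exactly of the form $F(z) = f(z_\ell)$ with $f = \exp$ to which Theorem~\ref{thm:VkSym} applies. Expanding $e^{z_\ell} = \sum_{n\ge 0} z_\ell^n/n!$ and using that $V_\k$ acts on the exponential as the termwise sum of its action on the homogeneous components $z_\ell^n \in \CP_n^d$, I would apply \eqref{eq:Vk} with $f(s) = s^n$ to each term and sum back up. Since $y_1 t_0 + \cdots + y_d t_{d-1} = \la y, \tb\ra$, the series recombines into $c_\k \int_{\CT^d} e^{\la y, \tb\ra} t_{\ell-1} (t_0\cdots t_{d-1})^{\k-1}\dd t$, which is $E_\k(e_\ell, y)$; the interchange of summation and integration is justified by uniform convergence on compact sets.

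For the Bessel function I would first record two elementary facts. The first is a swap identity: replacing $y$ by $y(1,j)$ amounts to exchanging $t_0$ and $t_{j-1}$ in the pairing $\la y(1,j), \tb\ra$, and since both $\CT^d$ and the weight $(t_0\cdots t_{d-1})^{\k-1}$ are invariant under this exchange, a change of variables gives $E_\k(e_1, y(1,j)) = E_\k(e_j, y)$. The second is that $\sum_{\ell=1}^d t_{\ell-1} = 1$ on $\CT^d$, whence $\sum_{\ell=1}^d E_\k(e_\ell, y) = c_\k \int_{\CT^d} e^{\la y, \tb\ra}(t_0\cdots t_{d-1})^{\k-1}\dd t$.

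Now, starting from $K_{\k,d}(e_\ell, y) = \tfrac{1}{d!}\sum_{\s\in S_d} E_\k(e_\ell, y\s)$, I would change variables $\tb \mapsto \tb\s$ in each integral; the pairing becomes $\la y, \tb\ra$ and the single factor $t_{\ell-1}$ is carried to $t_{m}$ for the index $m$ determined by $\s$, while the symmetric weight and the domain are unchanged. As $\s$ ranges over $S_d$ each index $m \in \{0,\dots,d-1\}$ occurs exactly $(d-1)!$ times, so summing and invoking $\sum_m t_m = 1$ yields $K_{\k,d}(e_\ell, y) = \tfrac{c_\k}{d}\int_{\CT^d} e^{\la y, \tb\ra}(t_0\cdots t_{d-1})^{\k-1}\dd t$. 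This expression is manifestly independent of $\ell$, which gives $K_{\k,d}(e_\ell, y) = K_{\k,d}(e_1, y)$. The intermediate form $\tfrac1d\sum_{j=1}^d E_\k(e_1, y(1,j))$ then appears because $E_\k(e_1, \cdot)$ is invariant under the stabilizer $S_{d-1}$ of the first coordinate (the weight $t_0(t_0\cdots t_{d-1})^{\k-1}$ being symmetric in $t_1,\dots,t_{d-1}$), so the $d!$-fold symmetrization collapses to the $d$ coset representatives $(1,j)$; combining this with the swap identity recovers the same integral.

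The computations are essentially routine, so I expect the only genuine obstacle to be careful bookkeeping: tracking the permutation action (left versus right, and the index shift between the $t$-labels $0,\dots,d-1$ and the coordinate labels $1,\dots,d$) through the change of variables in the last step, together with the termwise justification passing from monomials to the exponential in the first step.
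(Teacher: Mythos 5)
Your proof is correct and takes essentially the same route as the paper: specialize Theorem~\ref{thm:VkSym} to $f=\exp$ for the first identity, then use the swap $t_0\leftrightarrow t_{j-1}$ (giving $E_\k(e_1,y(1,j))=E_\k(e_j,y)$), the decomposition of $S_d$ into cosets of the stabilizer $S_{d-1}$, and $t_0+\cdots+t_{d-1}=1$. The differences are only organizational — you derive the manifestly $\ell$-independent closed form by symmetrizing directly (your counting $\#\{\s:\s(\ell)=m\}=(d-1)!$ is exactly the coset decomposition) and you make explicit the termwise passage from monomials to the exponential, which the paper leaves implicit.
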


\begin{proof}
The symmetric group $S_d$ can be decomposed as the left cosets of $S_{d-1}$ given by $S_{d-1}(1,j)$, 
$1 \le j\le d$, which proves the first identity in $K_{\k,d}(e_1,y)$. From the expression of $E_\k(e_\ell,y)$, 
it is easy to see that $E_\k(e_1, y(1,j)) = E_\k(e_j, y)$, so that the second expression for $K_{\k,d}$ follows 
readily from $t_0+\cdots + t_{d-1} =1$. 
\end{proof}

For $d =2$, we can use Proposition \ref{prop:Vd=2} to write $K_{\k,2}(x,\ii y)$ in terms of the classical Bessel
function $J_\nu$, which satisfies the integral formula
$$
  J_\nu(z) = \frac{(\frac{z}{2})^\nu}{\pi^{\f12}\Gamma(\nu+\f12)} \int_{-1}^1 \ee^{\ii z t}(1-t^2)^{\nu - \f12} \dd t.
$$
 
\begin{thm}
For $x, y \in \RR^2$, 
\begin{align} \label{eq:Besseld=2}
  K_{\k,2}(x, \ii\, y)  = &\,    \sqrt{\pi} \Gamma\left(\k+\tfrac12\right)  \ee^{\ii \frac{(x_1 + x_2) (y_1 + y_2)}{2}}\\
         & \times  \bigg(\frac{2} {(x_1 -x_2)(y_1- y_2)}\bigg)^{\k-\f12} J_{k-\f12}\left(\frac{(x_1 -x_2)(y_1- y_2)}{2}\right). \notag
\end{align}
Furthermore, for $d > 2$, the generalized Bessel function satisfies 
\begin{align} \label{eq:Besseld>2}
   K_{\k,d}(e_1, \ii \,y) = \frac{c_{\k,d}}{c_{\k,d-1}} \int_0^1 \ee^{\ii r y_d} K_{\k,d-1}\big(e_1, \ii (1-r) y'\big) 
         r^{\k-1} (1-r)^{(d-1)\k-1} \dd r,
\end{align}
where $y = (y', y_d) \in \RR^d$ and $e_1 = (1,0,\ldots 0)$ in either $\RR^d$ or $\RR^{d-1}$.
\end{thm}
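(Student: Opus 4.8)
The plan is to handle the two formulas by different routes. Formula \eqref{eq:Besseld>2} is a genuine recursion in the dimension and follows from the Corollary's integral for $K_{\k,d}(e_1,\cdot)$ by integrating out one coordinate; but since that integral only describes $K$ with first argument $e_1$, the $d=2$ statement \eqref{eq:Besseld=2}, which holds for arbitrary $x$, requires instead the full two-variable representation of Proposition~\ref{prop:Vd=2}.

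For \eqref{eq:Besseld=2}, I would substitute $f(u)=\ee^{\la x,u\ra}$ into \eqref{eq:Vd=2}; the exponent in the resulting integral over $\CT^2$ collects into $\ii(t_0 A+t_1 B)$ with $A=x_1y_1+x_2y_2$ and $B=x_1y_2+x_2y_1$, giving $E_\k(x,\ii y)$. Passing to $K_{\k,2}$ means averaging the $y$ and $y\sigma$ contributions; since $y\mapsto y\sigma$ interchanges $A$ and $B$, this averages the exponents $\frac{A+B}{2}\pm s\frac{A-B}{2}$, where $s:=t_0-t_1$ and I write $t_0=\frac{1+s}{2}$, $t_1=\frac{1-s}{2}$ with $s\in[-1,1]$. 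The factorizations $A+B=(x_1+x_2)(y_1+y_2)$ and $A-B=(x_1-x_2)(y_1-y_2)$ produce the claimed exponential prefactor and the Bessel argument $Z:=\frac{(x_1-x_2)(y_1-y_2)}{2}$. The crucial simplification is that folding $s\mapsto-s$ in one of the two averaged integrals pairs the asymmetric weight $t_0^\k t_1^{\k-1}$ with its reflection, and $\big(\frac{1+s}{2}\big)^\k\big(\frac{1-s}{2}\big)^{\k-1}+\big(\frac{1-s}{2}\big)^\k\big(\frac{1+s}{2}\big)^{\k-1}=\big(\frac{1-s^2}{4}\big)^{\k-1}$; this reduces $K_{\k,2}(x,\ii y)$ to a constant times $\ee^{\ii(x_1+x_2)(y_1+y_2)/2}\int_{-1}^1\ee^{\ii Zs}(1-s^2)^{\k-1}\dd s$, which is exactly the Bessel integral of the statement with $\nu=\k-\tfrac12$.

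What remains for \eqref{eq:Besseld=2} is constant bookkeeping: apply the given Bessel integral with $\nu=\k-\tfrac12$ and simplify $c_{\k,2}=\Gamma(2\k+1)/(\k\Gamma(\k)^2)$ by the Legendre duplication formula $\Gamma(2\k+1)=\pi^{-1/2}2^{2\k}\Gamma(\k+\tfrac12)\Gamma(\k+1)$. I expect this reconciliation, in particular the powers of $2$ hidden in $(Z/2)^{\nu}$, to be the fiddliest point, and I would pin it down with the sanity check that the right-hand side must reduce to $1$ at $y=0$, since $K_{\k,2}(x,0)=1$.

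For \eqref{eq:Besseld>2}, I would start from $K_{\k,d}(e_1,\ii y)=\frac{c_{\k,d}}{d}\int_{\CT^d}\ee^{\ii\la y,\tb\ra}(t_0\cdots t_{d-1})^{\k-1}\dd t$ and integrate out the coordinate $r:=t_{d-1}$ paired with $y_d$. For fixed $r$ the remaining $t_0,\dots,t_{d-2}$ satisfy $t_0+\cdots+t_{d-2}=1-r$, so the rescaling $t_j=(1-r)s_j$ carries them onto $\CT^{d-1}$ with Jacobian $(1-r)^{d-2}$, turns the exponent into $\ii(1-r)\la y',\sb\ra+\ii r y_d$, and factors the weight as $r^{\k-1}(1-r)^{(d-1)(\k-1)}(s_0\cdots s_{d-2})^{\k-1}$. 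Collecting the two sources of $(1-r)$-powers gives the exponent $(d-1)\k-1$, and the inner $\sb$-integral is, by the Corollary in dimension $d-1$, precisely $K_{\k,d-1}(e_1,\ii(1-r)y')$ up to the constant $c_{\k,d-1}/(d-1)$. This yields \eqref{eq:Besseld>2}. The main obstacle here is purely the bookkeeping of the Jacobian together with the weight so that the power $(1-r)^{(d-1)\k-1}$ emerges correctly; I would again verify the overall constant from the requirement that both sides equal $1$ at $y=0$, which reduces to the Beta integral $\int_0^1 r^{\k-1}(1-r)^{(d-1)\k-1}\dd r$.
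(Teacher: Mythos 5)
Your proposal is correct and takes essentially the same route as the paper: for \eqref{eq:Besseld=2} the paper likewise inserts the exponential into \eqref{eq:Vd=2}, symmetrizes over $S_2$ so that the two asymmetric weights merge into $\big(\f{1-s^2}{4}\big)^{\k-1}$ (your folding step, done silently before its first display), and finishes with the Bessel integral plus the duplication formula, while for \eqref{eq:Besseld>2} it performs exactly your slicing $t_{d-1}=r$, $t_i=(1-r)s_i$ with Jacobian $(1-r)^{d-2}$ applied to the Corollary's integral. Do carry out your $y=0$ normalization check literally, because the derivation you describe actually produces the constant $\f{d-1}{d}\,\f{c_{\k,d}}{c_{\k,d-1}}$ in \eqref{eq:Besseld>2} (the Corollary carries $c_{\k,d}/d$ in dimension $d$ but $c_{\k,d-1}/(d-1)$ in dimension $d-1$) and the prefactor $\Gamma\left(\k+\f12\right)\big(\f{4}{(x_1-x_2)(y_1-y_2)}\big)^{\k-\f12}$ in \eqref{eq:Besseld=2}, so the mismatches you will find against the printed formulas are constant typos in the paper's statement, not gaps in your argument.
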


\begin{proof}
Using \eqref{eq:Vd=2} with $t_0 = 1-t$ and $t_1=t$, and then changing variable $t \mapsto (1+s)/2$, we obtain
\begin{align*}
  K_{\k,2}(x, \ii \,y)  &\, =  \frac{\Gamma(2\k+1)}{2\Gamma(\k)\Gamma(\k+1)} 
       \int_0^1 \ee^{\ii (\la x,y\ra - (x_1 -x_2)(y_1- y_2)t)} (1-t)^{\k-1}t^{\k-1} \dd t \\
    &\, =  \ee^{\ii \frac{(x_1 + x_2) (y_1 + y_2)}{2}}\frac{\Gamma(\k+\f12)}{\Gamma(\k)}
        \int_{-1}^1 \ee^{- \ii \frac{(x_1 -x_2)(y_1- y_2)}{2} s} (1-s^2)^{\k-1} \dd s,
\end{align*}
where the constant has been simplified using the formula for $\Gamma(2 a)$. Writing the last integral in 
terms of $J_{\k-\f12}$ proves \eqref{eq:Besseld=2}. 

For $d > 2$, we denote by $\CT_\rho^d$ the simplex $\{t \in \RR_+^d:  t_0 + t_1 +\cdots t_{d-1} = \rho\}$. Then
\begin{align*}
  \int_{\CT^d} g(u) \dd u & = \int_0^1 \int_{\CT^{d-1}_{1-t_{d-1}}} g\big(t_0,\ldots, t_{d-2}, t_{d-1}\big) \dd t \\
        & = \int_0^1 (1-r)^{d-2} \int_{\CT^{d-1}} g \big( (1-r) s, r\big) \dd s \dd r, 
\end{align*}
where we set $t_{d-1} = r$ and $t_i = (1-r) s_i$ for $i = 1,2,\ldots, d-2$, which also implies that $t_0 = (1-r)s_0$. 
Setting $g(t) = \ee^{\ii (t_0 y_1 + \ldots + t_{d-1} y_d)}$ in the above identity, the recursive 
formula \eqref{eq:Besseld>2} follows readily. 
\end{proof}
 
For $x, y$ in the domain $\{x \in \RR^d: x_1 + x_2 + \cdots x_d =0\}$, a fairly involved recursive formula for the 
generalized Bessel functions associated to the symmetric group $S_d$, or root system $A_{d-1}$, is given in 
\cite{Am}. The domain, however, does not contain coordinate vectors $e_\ell$. In the case of $d =2$, it agrees 
with \eqref{eq:Besseld=2} with $x_2 = - x_1$ and $y_2 = - y_1$ apart from an extra constant $\sqrt{\pi}$. 

Finally, let us mention a property of the intertwining operator $V_\k$ that we shall need in the next section. 
We denote by $a_\k$ the normalization constant of $h_\k^2$ defined by $a_\k \int_{\sph} h_\k^2(x) d\s(x) =1$. 
For the symmetric group $S_d$, we have \cite[p. 216 and Thm 10.6.17]{DX} 
$$
  a_\k = \frac{2^{ \binom{d}{2}\k}}{\o_d} \frac{\Gamma( \binom{d}{2}\k+\frac{d}{2})}{\Gamma(\f{d}{2})} \prod_{j=2}^d \frac{\Gamma(\k+1)}{\Gamma(j\k+1)},
$$
where $\o_d =  2 \pi^{\f{d}{2}}/\Gamma(\frac{d}{2})$ is the surface area of $\sph$. 

\begin{lem}\label{lem:intV}
Let $f: \RR\mapsto \RR$ be a function such that both integrals below are defined. Then for $x \in \RR^d$, 
\begin{equation}\label{eq:intV}
   a_\k \int_{\sph} V_\k [f(\la x,\cdot\ra)](y) h_\k^2(y) \dd\s(y) = b_{\l_\k} \int_{-1}^1 f(\|x\| t) (1-t^2)^{\l_\k -\f12} \dd t,
\end{equation}
where $b_\l$ is the constant so that $b_\l \int_{-1}^1(1-t^2)^{\l-\f12}\dd t =1$. 
\end{lem}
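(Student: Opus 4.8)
The plan is to reduce to the unit sphere and then test both sides of \eqref{eq:intV} against the Gegenbauer basis $\{Z_n^{\l_\k}\}_{n\ge 0}$. Writing $x=\|x\|\,\xi$ with $\xi\in\sph$ (the case $x=0$ being trivial) and setting $g(t):=f(\|x\|t)$, one has $f(\langle x,z\rangle)=g(\langle\xi,z\rangle)$, so both sides depend on $x$ only through $\|x\|$ and $\xi$, and the right-hand side becomes $b_{\l_\k}\int_{-1}^1 g(t)(1-t^2)^{\l_\k-\f12}\dd t$. It therefore suffices to prove, for $\xi\in\sph$,
\begin{equation*}
\Lambda(g):=a_\k\int_{\sph}V_\k[g(\langle\xi,\cdot\rangle)](y)\,h_\k^2(y)\,\dd\s(y)=b_{\l_\k}\int_{-1}^1 g(t)(1-t^2)^{\l_\k-\f12}\dd t.
\end{equation*}
Since both sides are linear in $g$, it is enough to verify this on a spanning family of polynomials; the natural choice is $g=Z_n^{\l_\k}$, $n=0,1,2,\dots$, which are, up to constants, the Gegenbauer polynomials $C_n^{\l_\k}$, orthogonal on $[-1,1]$ with respect to $(1-t^2)^{\l_\k-\f12}$ and spanning all polynomials.

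For $g=Z_n^{\l_\k}$ the left-hand side is evaluated by the addition formula \eqref{eq:addition}: using the symmetry of the inner product and of the reproducing kernel, $V_\k[Z_n^{\l_\k}(\langle\xi,\cdot\rangle)](y)=P_n(h_\k^2;\xi,y)$ for $\xi,y\in\sph$. Because $P_n(h_\k^2;\cdot,\cdot)$ is the kernel of the projection $\proj_n^\k$ with respect to the measure $a_\k h_\k^2\,\dd\s$, integrating it against the constant function $1$ yields $\proj_n^\k 1(\xi)$, which equals $1$ for $n=0$ and $0$ for $n\ge 1$; hence $\Lambda(Z_n^{\l_\k})=\delta_{n,0}$. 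On the right-hand side, $Z_0^{\l_\k}=1$ gives the value $1$ by the normalization of $b_{\l_\k}$, while for $n\ge 1$ the orthogonality of $C_n^{\l_\k}$ to constants gives $0$. Thus the two functionals agree on every $Z_n^{\l_\k}$, hence on all polynomials.

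To pass from polynomials to a general $f$, I would invoke the positivity of $V_\k$: by \cite{R}, $V_\k F(y)=\int_{\RR^d}F\,\dd\mu_y$ with $\dd\mu_y$ a probability measure supported in the convex hull of the orbit $\{y\sigma:\sigma\in S_d\}$, which for $y\in\sph$ lies in the closed unit ball. Consequently $V_\k[g(\langle\xi,\cdot\rangle)](y)$ is an average of values $g(s)$ with $s\in[-1,1]$, so $\Lambda$ is a positive linear functional on $C[-1,1]$ with $\Lambda(1)=1$ and $|\Lambda(g)|\le\|g\|_{\infty,[-1,1]}$. By the Riesz representation theorem $\Lambda(g)=\int_{-1}^1 g\,\dd\nu$ for a probability measure $\nu$ on $[-1,1]$; the computation above shows $\int_{-1}^1 Z_n^{\l_\k}\,\dd\nu=\delta_{n,0}$ for all $n$, which matches every moment of $b_{\l_\k}(1-t^2)^{\l_\k-\f12}\dd t$ and, by moment determinacy on the compact interval, forces $\nu=b_{\l_\k}(1-t^2)^{\l_\k-\f12}\dd t$. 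This establishes the identity for $g\in C[-1,1]$, and the stated class of $f$ follows from this measure identity by a standard approximation argument, using the hypothesis that both integrals are defined. The main obstacle is precisely this last step: the identity on the Gegenbauer basis is immediate from the addition formula and Gegenbauer orthogonality, so the real work lies in securing the positivity and support input from \cite{R} that turns $\Lambda$ into integration against a probability measure on $[-1,1]$, after which moment matching finishes the proof.
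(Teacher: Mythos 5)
Your argument is correct, but there is no ``paper's own proof'' to compare it against line by line: the paper states Lemma~\ref{lem:intV} without proof, quoting it (and its generalization, where the right-hand side becomes an integral over the unit ball) from \cite{X97}. Your route is genuinely different from that source. In \cite{X97} the integration identity is established first, directly for polynomials, and the addition formula \eqref{eq:addition} is then \emph{derived from it}; you run the logic in the opposite direction, taking \eqref{eq:addition} as given, evaluating both sides of \eqref{eq:intV} on the spanning family $Z_n^{\l_\k}$ via the reproducing property \eqref{eq:projZn} (so that the left side is $\proj_n^\k 1(\xi)=\delta_{n,0}$) and Gegenbauer orthogonality on the right, and then upgrading from polynomials to general $f$ by means of R\"osler's theorem: positivity of the representing measure $\mu_y$ together with the fact that its support lies in the convex hull of the orbit $S_d\,y$, hence in the closed unit ball, which makes $\Lambda$ a positive unital functional on $C[-1,1]$ identified by moment matching. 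Each of these steps is sound, including the symmetry bookkeeping $V_\k\left[Z_n^{\l_\k}(\la \xi,\cdot\ra)\right](y)=P_n(h_\k^2;\xi,y)$, which uses the symmetry of $P_n$ visible from \eqref{eq:Pn-kernel}. What your approach buys is a short, self-contained verification using only facts already quoted in this paper, plus a measure-theoretic extension that genuinely covers the stated class of $f$ rather than just polynomials. Two caveats are worth recording: first, viewed against the literature rather than against this paper, your derivation is circular, since the proof of \eqref{eq:addition} in \cite{X97} rests on (the general form of) \eqref{eq:intV}, so your argument should be understood as a consistency check relative to the results quoted here, not as an independent proof of the pair; second, you need the support statement in R\"osler's theorem, which is stronger than what this paper actually cites from \cite{R} (existence and positivity of $\mu_x$ alone) --- it is indeed part of R\"osler's result, but you should cite it explicitly, since without it $\Lambda$ is not obviously a functional on $C[-1,1]$.
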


A more general identity holds for $V_\k f$ for generic function $f: \RR^d \mapsto \RR$, where the righthand side 
is replaced by the integral of $f(x)$ with respect to $(1-\|x\|^2)^{\g_\k -1}$ over the unit ball of $\RR^d$ \cite{X97}.
The identity shows that taking the average over the sphere removes the action of the reflection group.

\section{spherical $h$-harmonic  series}
\setcounter{equation}{0}

In the first subsection we outline the background and what is known for the spherical $h$-harmonic  series
in the setting of a generic reflection group. The new result for the symmetric group is given in the second
subsection.

\subsection{Spherical $h$-harmonic  series}
Let $G$ be a given reflection group. Let $h_\k$ be the $G$-invariant function with respect to which that
spherical $h$-harmonics are orthogonal. When $G$ is the symmetric group $S_d$, the function $h_\k$ is 
given in \eqref{eq:hk}. Another case of interests for our discussion is $G = \ZZ_2^d$, the group of sign
changes, that has 
\begin{equation} \label{eq:hkZ2d}
   h_\k(x) = \prod_{i=1}^d |x_i|^{\k_i}, \qquad \k_i \ge 0, \quad 1 \le i \le d.
\end{equation}
Unless specified otherwise, the discussion in this subsection holds for spherical $h$-harmonics associated
with a generic reflection group $G$; see, for example, \cite{DX}.

For $n \ge 0$, let $\{Y_{n,\ell}: 1 \le \ell \le \dim(n,d)\}$ be an orthonormal basis of $\CH_n^d(h_\k^2)$, 
normalized with respect to the inner product
$$
      \la f,g\ra_\k = a_\k \int_{\sph} f(x) g(x) h_\k^2(x) d\s(x).
$$
For $f \in L^2(\sph, h_\k^2)$, the spherical $h$-harmonic series of $f$ is defined by 
$$
  L^2(h_\k^2) = \bigoplus_{n=0}^\infty \CH_n^d(h_\k^2): \qquad f  = \sum_{n=0}^\infty \sum_{\ell =1}^{\dim(n,d)} \la f, Y_{n,\ell} \ra_\k Y_{n,\ell}. 
$$
The projection operator $\proj_n^\k: L^2(h_\k^2) \mapsto \CH_n^d(h_\k^2)$ and the $n$-th partial sum operator 
$S_n(h_\k^2;f)$ are defined by 
$$
\proj_n^\k f =  \sum_{\ell =1}^{\dim(n,d)} \la f, Y_{n,\ell} \ra_\k Y_{n,\ell} \quad\hbox{and}\quad
 S_n(h_\k^2;f) = \sum_{m=0}^n \proj_m^\k f.
$$ 
By the definition of the reproducing kernel $P_n(h_\k^2)$ in \eqref{eq:Pn-kernel}, it follows readily that
\begin{align} \label{eq:projZn}
   \proj_n^\k f(x) & = a_\k \int_{\sph} f(y) P_n(h_\k^2;x,y) h_\k^2(y) d\s(y) \\
               & = a_\k \int_{\sph} f(y) V_\k \left[Z_n^{\l_\k} (\la x, \cdot \ra)\right](y) h_\k^2(y) d\s(y), \notag
\end{align}
where the second identity follows from the addition formula \eqref{eq:addition}. By the definition of $Z_n^\l$ 
in \eqref{eq:Zn}, the partial sum operator is related to that of Fourier series in the Gegenbauer polynomials. 

The $n$-th partial sum operator $S_n(h_\k^2; f)$ converges to $f$ in the $L^2(h_\k^2)$ norm by the classical 
Hilbert space theory. For $p \ne 2$, we consider the convergence of the Ces\`aro means, which often
serve as a test stone of our knowledge on summability method. For $\delta > -1$, the 
Ces\`aro $(C,\delta)$ means of the spherical $h$-harmonic  series are defined by 
\begin{align} \label{eq:Sdelta}
 S_n^\d (h_\k^2;f) : = &\, \frac{1}{ \binom{n+\delta}{n}}\sum_{k=0}^n   \binom{n-k+\delta}{n-k} \proj_n^\k f \\
   = &\, a_\k \int_{\sph} f(y) K_n^\d (h_\k^2;x,y) h_\k^2(y) \dd y,   \notag
\end{align}
where $K_n^\d (h_\k^2; \cdot,\cdot)$ is the $(C,\d)$ means of $Z_m^{\l_\k}(\la \cdot,\cdot\ra)$ and the second
identity follows from \eqref{eq:projZn}.  
Let $w_\l(t) = (1-t^2)^{\l-\f12}$ be the Gegenbauer weight function. Denote by $k_n^\delta(w_\l; s,t)$ 
the kernel of the $(C,\delta)$ means of the Fourier--Gegenbauer series on $[-1,1]$; see the definition 
in the next section. Then we can derive from \eqref{eq:Zn} that
\begin{equation} \label{eq:SdKernel}
  K_n^\d (h_\k^2;x,y) =  V_\k \left[k_n^\delta (w_{\l_\k}; \la x, \cdot \ra, 1)\right](y). 
\end{equation} 

Since the $(C,\delta)$ means are linear integral operators, we know that $S_n^\delta (h_\k^2; f)$ converges 
in $L^1(h_\k^2; \sph)$ or in $C(\sph)$  if and only if 
\begin{equation} \label{eq:conv-iff}
   \sup_{x \in \sph} \int_{\sph} \left| K_n^\delta(h_\k^2; x, y)\right| h_\k^2(y) d\s(y) < \infty.
\end{equation} 
For $1 \le p \le \infty$, let $\|\cdot\|_{p,\k}$ denote the $L^p(h_\k^2, \sph)$ norm for $1 \le p < \infty$ and 
the uniform norm of $C(\sph)$ for $p = \infty$. A sufficient condition for the convergence of the $(C,\delta)$ 
means of spherical $h$-harmonics was given in \cite{X97}. 

\begin{thm} \label{thm:conv-Snd}
Let $f \in L^p(h_\k^2, \sph)$, $1\le p < \infty$ or $f \in C(\sph)$. Then the $(C,\delta)$ means $S_n^\d (h_\k^2;f)$ 
converges to $f$ in $\|\cdot\|_{\k,p}$ norm if $\d > \l_\k$. 
\end{thm}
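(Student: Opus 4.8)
The plan is to reduce the convergence question to the uniform boundedness condition \eqref{eq:conv-iff}, and then to bound the kernel integral by transferring the analysis to the one-dimensional Fourier--Gegenbauer kernel $k_n^\delta(w_{\l_\k}; s, t)$ through the intertwining-operator representation \eqref{eq:SdKernel}. Since the $(C,\delta)$ means are linear integral operators and the argument is identical in $L^1(h_\k^2;\sph)$ and in $C(\sph)$ (and then follows for intermediate $p$ by interpolation together with the $L^2$ convergence already available from Hilbert space theory), it suffices to establish
\begin{equation*}
  \sup_{x \in \sph} a_\k \int_{\sph} \big| V_\k\big[k_n^\delta(w_{\l_\k}; \la x,\cdot\ra, 1)\big](y)\big|\, h_\k^2(y)\, \dd\s(y) < \infty, \qquad \d > \l_\k.
\end{equation*}

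First I would exploit the positivity of the representing measure $d\mu_x$ for $V_\k$, guaranteed by the result of \cite{R}: because $V_\k$ is given by integration against a nonnegative probability measure, one has the pointwise bound $\big|V_\k[g(\la x,\cdot\ra)](y)\big| \le V_\k\big[|g|(\la x,\cdot\ra)\big](y)$. This lets me move the absolute value inside $V_\k$ and replace the signed kernel by $V_\k\big[\,|k_n^\delta(w_{\l_\k};\la x,\cdot\ra,1)|\,\big](y)$. Next I would integrate in $y$ over the sphere and apply \lemref{lem:intV}: the identity \eqref{eq:intV} converts the spherical average of $V_\k$ applied to a function of $\la x,\cdot\ra$ into a one-dimensional Gegenbauer-weighted integral, namely
\begin{equation*}
  a_\k \int_{\sph} V_\k\big[\,|k_n^\delta(w_{\l_\k};\la x,\cdot\ra,1)|\,\big](y)\, h_\k^2(y)\,\dd\s(y)
    = b_{\l_\k} \int_{-1}^1 \big|k_n^\delta(w_{\l_\k}; s, 1)\big|\,(1-s^2)^{\l_\k - \f12}\,\dd s,
\end{equation*}
where I have used $\|x\|=1$ for $x\in\sph$. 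The crucial point is that the right-hand side no longer depends on $x$, which disposes of the supremum automatically.

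The remaining step is the classical one-dimensional estimate: the $L^1$-type norm of the $(C,\delta)$ Fourier--Gegenbauer kernel against its own weight is uniformly bounded in $n$ precisely when $\delta$ exceeds the critical index $\l_\k$. This is the well-known sharp bound for the Cesàro means of Gegenbauer (ultraspherical) expansions, for which the threshold is exactly the weight parameter $\lambda$; I would cite the standard result (e.g.\ from \cite{DX}) that $\sup_n \int_{-1}^1 |k_n^\delta(w_\l; s, 1)| w_\l(s)\,\dd s < \infty$ for $\delta > \lambda$. The main obstacle, and the only place genuine analysis enters, is controlling this one-dimensional kernel: the naive Gegenbauer-weighted $L^1$ bound that one reads off directly is what forces the threshold $\delta > \l_\k$ rather than the sharper $\delta > \l_\k - (d-1)\k$ promised in the introduction. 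In other words, the loss in this theorem comes precisely from the averaging in \lemref{lem:intV}, which destroys the group action and prevents one from seeing the cancellation that the explicit kernel formula \eqref{eq:Pn(x,e)} will later recover at coordinate vectors; so while proving \thmref{thm:conv-Snd} itself is routine once the one-dimensional estimate is invoked, the whole point of the later sections is to do better by \emph{not} averaging.
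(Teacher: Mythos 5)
Your proposal is correct and takes essentially the same route as the paper: reduce via \eqref{eq:conv-iff} and \eqref{eq:SdKernel}, use \lemref{lem:intV} to pass to the one-dimensional Gegenbauer kernel bound $\sup_n\int_{-1}^1 |k_n^\d(w_{\l_\k};t,1)|\,w_{\l_\k}(t)\,\dd t<\infty$ for $\d>\l_\k$ (the classical Szeg\H{o} result), and finish with Riesz interpolation for $1<p<\infty$. Your explicit appeal to R\"osler's positivity of $V_\k$ to justify $|V_\k g|\le V_\k |g|$ before averaging is a step the paper leaves implicit, but it is the same argument.
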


The proof follows from Lemma \ref{lem:intV} and \eqref{eq:SdKernel}, which reduces the boundedness 
in \eqref{eq:conv-iff} to the boundedness of $\int_{-1}^1 |k_n^\delta (w_{\l_\k}; \la x, \cdot \ra, 1)| w_{\l_\k}(t) \dd t$, 
and the latter holds if and only if $\d > \l_\k$ by the classical result of Szeg\H{o} \cite[Theorem 9.1.3]{Sz}. 
The case $1 < p < \infty$ follows from the Riesz interpolation. 

The theorem holds for spherical $h$-harmonics  series associated with all reflection groups. The use of 
\eqref{eq:intV}, however, removes the action of reflection group altogether and, as a consequence and not
surprisingly, we pay the price that the condition $\d > \l_\k$ is not sharp in general. This is first illustrated 
in the case when $G= \ZZ_2^d$. For the weight function $h_\k(x) = \prod_{i=1}^d |x_i|^{\k_i}$ given 
in \eqref{eq:hkZ2d}, the intertwining operator $V_\k$ has an integral representation
$$
 V_\k f(x) = c_\k \int_{[-1,1]^d} f(x_1t_1,\ldots,x_d t_d) \prod_{i=1}^d (1+t_i)(1-t_i^2)^{\k_i -1} \dd t.
$$
This leads to, by \eqref{eq:SdKernel}, a closed formula for the kernel $K_n^\d(h_\k^2;\cdot,\cdot)$, which
makes it possible to obtain a sharp estimate of the kernel that can be used to determine the critical index of 
the $(C,\delta)$ means. Indeed, while Theorem \ref{thm:conv-Snd} establishes the convergence for 
$\delta > \l_\k = \sum_{i=1}^d \k_i + \f{d-2}{2}$ in this setting, it was proved in \cite{LX} that the 
Ces\`aro means $S_n^\d (h_\k^2;f)$ converges to $f$ in $\|\cdot\|_{\k,p}$ norm for $p=1$ or $\infty$ if 
and only if $\d > \l_\k - \min\limits_{1\le i\le d} \k_i$ for $G= \ZZ_2^d$.

We shall show in the next subsection that our new integral representation of $V_\k$ for the symmetric group
will allow us to establish a similar result for the symmetric group, albeit only for convergence of $S_n^\d (h_\k^2;f)$ 
at coordinate vectors. 

\subsection{Spherical $h$-harmonics series associated to symmetric group}
In this subsection, $G$ is the symmetric group $S_d$ and $h_\k$ is given by \eqref{eq:hk}. Recall that
$\l_\k = \binom{d}{2}\k + \frac{d-2}{2}$ by \eqref{eq:lambdak}. Our main result is the following theorem on 
the Ces\`aro $(C,\delta)$ means of spherical $h$-harmonic series. 

\begin{thm} \label{thm:cesaro-sd}
Let $h_\k$ be defined as in \eqref{eq:hk} and $f \in C(\sph)$. Then $S_n^\d (h_\k^2, f)$ converges to
$f$ at the coordinate vectors $e_\ell$, $1 \le \ell\le d$, if 
\begin{equation}\label{eq:cesaro-sd}
\d > \lambda_\k - (d-1)\k = \binom{d-1}{2} \k + \f{d-2}{2}. 
\end{equation}
\end{thm}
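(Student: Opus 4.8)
The plan is to deduce convergence from a single uniform bound, via the standard Banach--Steinhaus mechanism, and then to reduce that bound to a one-dimensional estimate for the Ces\`aro means of a Jacobi expansion. For fixed $\ell$ the maps $f \mapsto S_n^\d(h_\k^2;f)(e_\ell)$ are bounded linear functionals on $C(\sph)$ with
\begin{equation*}
 S_n^\d(h_\k^2;f)(e_\ell) = a_\k \int_{\sph} f(y)\, K_n^\d(h_\k^2;e_\ell,y)\, h_\k^2(y)\,\dd\s(y),
\end{equation*}
so convergence at $e_\ell$ for every $f \in C(\sph)$ follows once we check (i) convergence on the dense subspace of polynomials and (ii) the uniform bound $\sup_n \int_{\sph}|K_n^\d(h_\k^2;e_\ell,y)|\,h_\k^2(y)\,\dd\s(y) < \infty$. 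Point (i) is automatic: if $Y \in \CH_m^d(h_\k^2)$ then $S_n^\d(h_\k^2;Y) = \binom{n-m+\d}{n-m}\binom{n+\d}{n}^{-1}Y \to Y$. By the $S_d$-symmetry of the whole construction it suffices to treat $\ell = 1$, so everything rests on (ii).

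To make (ii) concrete I would feed the integral representation into the kernel. Since $\la e_1,\cdot\ra$ depends on a single coordinate, \eqref{eq:SdKernel} together with \eqref{eq:Vk} gives
\begin{equation*}
 K_n^\d(h_\k^2;e_1,y) = c_\k \int_{\CT^d} k_n^\d\big(w_{\l_\k};\la y,\tb\ra,1\big)\, t_0\,(t_0\cdots t_{d-1})^{\k-1}\,\dd\tb, \qquad \la y,\tb\ra = y_1 t_0 + \cdots + y_d t_{d-1},
\end{equation*}
and the goal becomes a uniform bound on $J_n := \int_{\sph}|K_n^\d(h_\k^2;e_1,y)|\,h_\k^2(y)\,\dd\s(y)$. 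The one-dimensional kernel $k_n^\d(w_{\l_\k};\cdot,1)$ concentrates near the value $1$ of its argument as $n \to \infty$; since $\la y,\tb\ra \le 1$ and the weight $t_0(t_0\cdots t_{d-1})^{\k-1}$ favours $\tb$ near the vertex $(1,0,\ldots,0)$, the mass of $J_n$ sits near $y = e_1$. This is where the improvement must originate: $e_1$ lies on every reflecting hyperplane $y_i = y_j$ with $2 \le i < j \le d$, so near $e_1$ one has $h_\k^2(y) \approx \prod_{2\le i<j\le d}|y_i-y_j|^{2\k}$, the $S_{d-1}$-invariant weight in the tangential variables $y' = (y_2,\ldots,y_d)$, which vanishes to total order $2\binom{d-1}{2}\k$.

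The delicate point is that this weight vanishing is \emph{not} by itself enough. Indeed, if one estimates crudely by the positivity of $V_\k$, replacing $|K_n^\d(h_\k^2;e_1,\cdot)|$ by $V_\k\big[\,|k_n^\d(w_{\l_\k};\la e_1,\cdot\ra,1)|\,\big]$ and then applying \lemref{lem:intV} --- which is exactly the route behind the non-sharp \thmref{thm:conv-Snd} --- the group action is averaged away and one recovers only the threshold $\d > \l_\k$. The extra $(d-1)\k$ can be gained only by retaining the oscillation of $k_n^\d$ across the simplex, i.e.\ by a genuine, cancellation-respecting pointwise estimate of the intertwining integral defining $K_n^\d(h_\k^2;e_1,y)$. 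Carrying out the recursive reduction of that simplex integral (in the spirit of \eqref{eq:Besseld>2}) expresses it through Ces\`aro means of a Jacobi expansion and collapses $J_n$ to a one-dimensional integral of the $(C,\d)$ Jacobi kernel against a weight whose effective parameter has dropped from $\l_\k$ to $\mu := \l_\k - (d-1)\k = \binom{d-1}{2}\k + \f{d-2}{2}$. The critical estimate, namely that this one-dimensional integral stays bounded in $n$ exactly when $\d > \mu$ (by a Szeg\H{o}-type computation), is the technical statement deferred to Section~4; granting it, $J_n$ is uniformly bounded precisely under \eqref{eq:cesaro-sd}.

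I expect the main obstacle to be precisely this cancellation: showing that integrating the oscillatory kernel $k_n^\d(w_{\l_\k};\la y,\tb\ra,1)$ over the simplex against the Dirichlet weight $t_0(t_0\cdots t_{d-1})^{\k-1}$ improves the decay of $K_n^\d(h_\k^2;e_1,y)$ in $y$ by just enough to convert the tangential order $2\binom{d-1}{2}\k$ of $h_\k^2$ into the shift $\l_\k \mapsto \mu$, with no loss. Tracking the combinatorics of the reduction so that the tangential $S_{d-1}$-weight and the simplex weight assemble into exactly the parameter $\mu$, and then controlling the resulting Jacobi--Ces\`aro integral uniformly in $n$, is the crux that Section~4 must supply.
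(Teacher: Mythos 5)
Your functional-analytic setup is correct and coincides with the paper's: reduction of pointwise convergence at $e_\ell$ to a uniform bound on $J_n = \int_{\sph}|K_n^\d(h_\k^2;x,e_\ell)|h_\k^2(x)\,\dd\s(x)$ (plus convergence on $h$-harmonic polynomials), the kernel written through \eqref{eq:SdKernel} and \eqref{eq:Vk}, and the correct diagnosis that positivity plus Lemma \ref{lem:intV} can only recover $\d>\l_\k$. But the heart of the proof is missing, and the mechanism you propose in its place would not work. You assert that a ``recursive reduction in the spirit of \eqref{eq:Besseld>2}'' collapses $J_n$ to a one-dimensional Ces\`aro--Jacobi Lebesgue constant with effective parameter $\mu=\l_\k-(d-1)\k$, to be settled by a Szeg\H{o}-type computation ``deferred to Section 4.'' The recursion \eqref{eq:Besseld>2} exists only because the exponential factorizes, $\ee^{\ii\la y,\tb\ra}=\prod_j \ee^{\ii y_j t_{j-1}}$; the kernel $k_n^\d(w_{\l_\k};\cdot,1)$ has no such multiplicative structure, and since the absolute value in $J_n$ sits outside the simplex integral, no such collapse is available. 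Moreover, the statement you defer to is not the statement Section 4 proves, so the deferral points at a gap rather than at the paper's technical lemma.

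What is actually needed, and what the paper supplies, is a three-step argument. First, $k_n^\d(w_{\l_\k};\cdot,1)$ is expanded by Szeg\H{o}'s formula (Lemma \ref{lem:knd-sum}) into finitely many Jacobi polynomials $P_n^{(\l_\k+\d+j+\f12,\,\l_\k-\f12)}$ plus a tail $G_n^\d$, the tail being controlled by Lemma \ref{lem:knd-est} and disposed of via \eqref{eq:intV}. Second---the crux---one proves the \emph{pointwise} estimate of Theorem \ref{thm:estimate},
\begin{equation*}
\left|\int_{\CT^d} P_n^{(\a,\b)}(x_1t_0+\cdots+x_dt_{d-1})\,\varphi(t)\,(t_0\cdots t_{d-1})^{\k-1}\dd t\right|
\le c\, n^{-(d-1)\k-\f12}\sum_{i=1}^d
\frac{\prod_{j\ne i}|x_j-x_i|^{-\k}}{\bigl(\sqrt{1-|x_i|}+n^{-1}\bigr)^{\a+\f12-(d-1)\k}},
\end{equation*}
obtained by integrating the simplex variables one at a time (an induction using the classes $\CS_n^v(\mu)$ and a one-variable lemma of Dai--Xu): each of the $d-1$ integrations trades a factor $n^{-2\k}|x_j-x_i|^{-\k}$ for a shift of $\k$ in the effective Jacobi parameter. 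This iterated one-dimensional gain, not the vanishing of the spherical weight by itself, is where $(d-1)\k$ comes from, and its price is the singular factor $\prod_{j\ne i}|x_j-x_i|^{-\k}$ in $x$. Third, this pointwise bound is integrated over $\sph$: the singular factors are absorbed by part of $h_\k^2(x)$, the remaining pairs obey $|x_j-x_k|\le 2\sqrt{1-|x_i|}$ for $j,k\ne i$ and feed the denominator, and spherical coordinates then yield boundedness exactly when $\d>\binom{d-1}{2}\k+\f{d-2}{2}$. Your proposal anticipates none of these steps; in particular the interplay between the singular factors of the pointwise estimate and the weight $h_\k^2$, which is what makes the final one-dimensional integral converge, is absent, so the proof as proposed cannot be completed along the route you describe.
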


The proof requires a sharp estimate of the kernel $K_n^\d(h_\k^2; x,e_\ell)$, which comes down to estimate 
an integral of Jacobi polynomials over the simplex. We start by recalling the Jacobi polynomials and the 
definition of the kernel $k_n^\d$. 

The Jacobi polynomials $P_n^{(\a,\b)}$ are orthogonal with respect to the weight function
$w_{\a,\b}(t) = (1-t)^\a (1+t)^\b$ on $[-1,1]$. For $g \in L^1(w_{\a,\b}; [-1,1])$, let $s_n^\delta (w_{\a,\b};g)$ 
denote the Ces\`aro $(C,\delta)$ means of the Fourier-Jacobi series. Then 
$$
  s_n^\delta (w_{\a,\b};g) = c_{\a,\b} \int_{-1}^1 g(s) k_n^\delta (w_{\a,\b}; \cdot ,s) w_{\a,\b}(s) \dd s,
$$  
where the kernel $k_n^\delta (w_{\a,\b}; \cdot, \cdot)$ is given by 
$$
  k_n^\delta (w_{\a,\b}; s,t) = \frac{1}{\binom{n+\delta}{n}} \sum_{k=0}^n \binom{n-k+\delta}{n-k} 
        \frac{P_k^{(\a,\b)}(s)P_k^{(\a,\b)}(t)}{h_k^{(\a,\b)}},
$$
in which $h_k^{(\a,\b)}$ is the $L^2(w_{\a,\b}, [-1,1])$ norm of $P_n^{(\a,\b)}$. The Gegenbauer polynomials
are related to the Jacobi polynomials by 
$$
  C_n^\l(t) = \frac{(2\l)_n}{(\l+\f12)_n} P_n^{(\l-\f12,\l-12)}(t) 
$$
and they are orthogonal with respect to $w_\l(t) = w_{\l-\f12,\l-\f12}(t)$ on $[-1,1]$. In particular, 
$k_n^\d(w_{\l}; \cdot,\cdot)= k_n^\d(w_{\l-\f12,\l-\f12}; \cdot,\cdot)$. 

Throughout the rest of this paper, we denote by $c$ a generic constant that may depend on fixed parameters
such as $\k$ and $d$, and its value may change from line to line. Furthermore, we write $A\sim B$ if 
$A \le c B$ and $B \le c A$. 
  
Our starting point is the following result in \cite[p. 261, (9.4.13)]{Sz}, which shows that the main term of 
$k_n^\d(w_{\a,\b};\cdot,1)$ is a Jacobi polynomial. 

\begin{lem} \label{lem:knd-sum}
For any $\alpha,\beta > -1$ such that $\alpha+ \beta+ \delta + 3 >0$,
\begin{equation}\label{eq:knd-sum}
  k_n^\delta(w_{\a,\b}, t,1) = \sum_{j=0}^J b_j(\alpha,\beta,\delta,n)
    P_n^{(\alpha+\delta+j+1,\beta)}(t) + G_n^{\delta}(t),
\end{equation}
where $J$ is a fixed integer and
$$
   G_n^\delta(t) = \sum_{j=J+1}^\infty d_j(\alpha,\beta,\delta,n) k_n^{\delta+j}(w_{\alpha,\beta}, 1,t);
$$
moreover, the coefficients satisfy the inequalities,
$$
 |b_j(\alpha,\beta,\delta,n)| \le c n^{\alpha+1-\delta - j} \quad \hbox{and}
 \quad  |d_j(\alpha,\beta,\delta,n)| \le c j^{-\alpha-\beta-\delta - 4}.
$$
\end{lem}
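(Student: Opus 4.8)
The plan is to pass to generating functions in the Cesàro parameter and to read off the expansion from the singularity of the Jacobi generating function at $r=1$. First I would make the summand explicit: using $P_k^{(\a,\b)}(1)=\binom{k+\a}{k}$ together with the closed form of the squared norm $h_k^{(\a,\b)}$, one obtains
$$
\frac{P_k^{(\a,\b)}(1)}{h_k^{(\a,\b)}}=\frac{2k+\a+\b+1}{2^{\a+\b+1}\Gamma(\a+1)}\,\frac{\Gamma(k+\a+\b+1)}{\Gamma(k+\b+1)},
$$
so that, writing $A_m^{\delta}=\binom{m+\delta}{m}$, the kernel is the normalized Cauchy product
$$
A_n^{\delta}\,k_n^{\delta}(w_{\a,\b};t,1)=\sum_{k=0}^{n}A_{n-k}^{\delta}\,\frac{P_k^{(\a,\b)}(1)}{h_k^{(\a,\b)}}\,P_k^{(\a,\b)}(t).
$$

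Since $\sum_{m\ge0}A_m^{\delta}r^m=(1-r)^{-\delta-1}$, the ordinary generating function of the kernel factors as
$$
\sum_{n\ge0}A_n^{\delta}\,k_n^{\delta}(w_{\a,\b};t,1)\,r^n=(1-r)^{-\delta-1}\,\Phi(r,t),\qquad \Phi(r,t):=\sum_{k\ge0}\frac{P_k^{(\a,\b)}(1)}{h_k^{(\a,\b)}}\,P_k^{(\a,\b)}(t)\,r^k.
$$
The central analytic input is the classical closed form of the endpoint Jacobi generating function $\Phi$: near $r=1$ it has the shape $(1-r)^{-\a-\b-2}\Psi(r,t)$ with $\Psi$ analytic in $r$ in a neighborhood of $r=1$, the whole $t$-dependence being carried inside a Gauss hypergeometric factor whose argument is a rational function of $(r,t)$ that is regular at $r=1$. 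Consequently the generating function of the kernel equals $(1-r)^{-(\a+\b+\delta+3)}\Psi(r,t)$, and all the kernels $k_n^{\delta+j}$ share the same $\Psi$ with the singular exponent lowered by $j$.

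The engine is then a Darboux-type analysis of this singularity. Peeling off successive integer powers of $(1-r)$ from $\Psi(r,t)$ produces a sequence of singular blocks $(1-r)^{-(\a+\b+\delta+3-j)}$ whose $r^n$-coefficients, after division by $A_n^{\delta}$, reconstitute the shifted Jacobi polynomials $P_n^{(\a+\delta+j+1,\b)}(t)$ with explicit scalars; matching the full singular behaviour, and not just the power of $(1-r)$, is what forces the coefficients to the stated size $|b_j|\le c\,n^{\a+1-\delta-j}$. Truncating after $J+1$ blocks gives the finite sum $\sum_{j=0}^{J}b_j P_n^{(\a+\delta+j+1,\b)}(t)$. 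For the tail it is cleaner to resum rather than to keep individual polynomials: because lowering the singular exponent by one unit is precisely the passage from $k_n^{\delta+j}$ to $k_n^{\delta+j+1}$ at the level of generating functions, the leftover blocks reassemble into $G_n^{\delta}(t)=\sum_{j>J}d_j\,k_n^{\delta+j}(w_{\a,\b};1,t)$, with the $d_j$ governed by the higher Taylor coefficients of $\Psi$. The decay $|d_j|\le c\,j^{-\a-\b-\delta-4}$ comes from that coefficient decay, and since $-\a-\b-\delta-4<-1$ exactly when $\a+\b+\delta+3>0$, the hypothesis is precisely what renders the defining series for $G_n^{\delta}$ absolutely convergent.

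The main obstacle is the analytic input of the second step: establishing the closed form of $\Phi(r,t)$ and verifying that its $r=1$ singularity genuinely factors as $(1-r)^{-\a-\b-2}\Psi(r,t)$ with $\Psi$ analytic, and then justifying the term-by-term singularity analysis with remainder estimates uniform in $t\in[-1,1]$. The constant bookkeeping through ratios of Cesàro numbers $A_n^{\mu}\sim n^{\mu}/\Gamma(\mu+1)$ and through the Taylor coefficients of $\Psi$ is routine, but it must be carried out carefully enough to exhibit both sharp bounds and the role of $\a+\b+\delta+3>0$. An alternative that sidesteps the explicit form of $\Phi$ is a purely recursive scheme: prove a single reduction identity expressing $k_n^{\delta}(w_{\a,\b};t,1)$ as $b_0 P_n^{(\a+\delta+1,\b)}(t)$ plus a series of higher-order kernels $k_n^{\delta+j}$, using the Jacobi contiguity relations that raise the first parameter $\a$ together with the Cesàro recurrence $A_m^{\delta}=A_m^{\delta+1}-A_{m-1}^{\delta+1}$, and then iterate $J$ times; this reproduces the same two-part structure with the same coefficient orders.
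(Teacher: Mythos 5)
The first thing to note is that the paper itself offers no proof of this lemma: it is quoted verbatim from Szeg\H{o} \cite[p.~261, (9.4.13)]{Sz}, so the benchmark for your attempt is Szeg\H{o}'s own derivation, which is indeed a generating-function argument --- your starting point is the historically correct one. Your preparatory steps are fine: the evaluation of $P_k^{(\a,\b)}(1)/h_k^{(\a,\b)}$, the Cauchy-product identity for $A_n^\d k_n^\d(w_{\a,\b};t,1)$ with $A_m^\d=\binom{m+\d}{m}$, and the factorization $\sum_n A_n^\d k_n^\d(w_{\a,\b};t,1)\,r^n=(1-r)^{-\d-1}\Phi(r,t)$ are all correct, as is your reading of why the hypothesis $\a+\b+\d+3>0$ enters (summability of the $d_j$).

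The genuine gap is your ``central analytic input,'' which is false as stated, and the Darboux mechanism built on it cannot produce the right-hand side of \eqref{eq:knd-sum}. In the classical closed form of $\Phi$ the hypergeometric argument is of the shape $2r(t-1)/(1-r)^2$, which is singular, not regular, at $r=1$. More decisively, for fixed $t=\cos\t\in(-1,1)$ the function $\Phi(\cdot,t)$ stays bounded as $r\to1^-$: its coefficients $P_k^{(\a,\b)}(1)P_k^{(\a,\b)}(t)/h_k^{(\a,\b)}$ are of size $k^{\a+1/2}$ times oscillation, which corresponds to dominant singularities at $r=\ee^{\pm\ii\t}$ rather than at $r=1$. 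Hence a factorization $\Phi=(1-r)^{-\a-\b-2}\Psi$ with $\Psi$ analytic near $r=1$ forces $\Psi$ to vanish there to high order, and the Taylor blocks you propose to peel off are not the dominant contributions at all. The structural failure is visible by comparing both sides: if $\Psi$ had Taylor coefficients $c_j(t)$ at $r=1$, the $j$-th block would contribute $c_j(t)\binom{n+\a+\b+\d+2-j}{n}\big/\binom{n+\d}{n}\sim c_j(t)\,n^{\a+\b+2-j}$ to $k_n^\d$, an $n$-independent function of $t$ times a scalar; this can never equal $b_j P_n^{(\a+\d+j+1,\b)}(t)$, a degree-$n$ polynomial of size about $n^{\a+1/2-\d-j}$ (oscillating) on compact subsets of $(-1,1)$ --- even the exponents of $n$ disagree. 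The correct derivation keeps the $t$-dependence inside hypergeometric functions of the singular variable $2r(t-1)/(1-r)^2$ and matches the kernel's generating function against the generating functions of the shifted polynomials $P_n^{(\a+\d+j+1,\b)}(t)$, which involve the same variable with shifted parameters; the Gamma-function bookkeeping then yields the stated bounds on $b_j$ and $d_j$. Your closing alternative --- contiguity relations raising $\a$ combined with the recurrence $A_m^\d=A_m^{\d+1}-A_{m-1}^{\d+1}$ --- is essentially this viable route, but since you leave it as a one-sentence sketch, the proposal as it stands does not contain a proof.
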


When $\d$ is large, the kernel $k_n^\d(w_{\a,\b})$ is non-negative and satisfies an estimate given 
in the lemma below, which was first used in \cite{BC} and \cite{CTW}. 

\begin{lem} \label{lem:knd-est}
Let $\alpha,\beta \ge -1/2$.  If $\delta \ge \alpha + \beta + 2$, then
$$
 0 \le k_n^\delta(w_{\alpha,\beta}, t,1)  \le c n^{-1} (1-t+n^{-2})^{-( \alpha+3/2)}.
$$
\end{lem}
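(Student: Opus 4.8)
The plan is to establish the two inequalities separately, handling the upper bound through the decomposition of \lemref{lem:knd-sum} and the lower bound through a classical positivity theorem. At the endpoint one has $P_k^{(\a,\b)}(1)=\binom{k+\a}{k}>0$, so the nonnegativity $k_n^\d(w_{\a,\b},t,1)\ge 0$ is \emph{not} visible from the asymptotic expansion: the Jacobi polynomials $P_n^{(\a+\d+j+1,\b)}$ appearing in \eqref{eq:knd-sum} oscillate and change sign on $[-1,1]$. Instead, nonnegativity is a genuine positivity statement for high-order Ces\`aro means of Jacobi expansions, namely that for $\d\ge\a+\b+2$ the $(C,\d)$ kernel evaluated at one endpoint stays nonnegative on $[-1,1]$; this is the Fej\'er--Kogbetliantz-type fact underlying \cite{BC} and \cite{CTW}, and I would quote it rather than reprove it.

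For the upper bound I would insert \eqref{eq:knd-sum} and estimate each summand by Szeg\H{o}'s pointwise bounds for Jacobi polynomials \cite{Sz}: writing $t=\cos\t$, one has $|P_n^{(a,\b)}(\cos\t)|\le c\,n^{a}$ for $0\le\t\le 1/n$ and $|P_n^{(a,\b)}(\cos\t)|\le c\,n^{-1/2}\t^{-a-1/2}$ for $1/n\le\t\le\pi/2$. Applying these with $a=\a+\d+j+1$ to the $j$-th main term $b_j P_n^{(\a+\d+j+1,\b)}(t)$ and using $|b_j|\le c\,n^{\a+1-\d-j}$, the contribution in the range $\t\le 1/n$ is bounded by $c\,n^{2\a+2}$, which matches $n^{-1}(1-t+n^{-2})^{-(\a+3/2)}\sim n^{2\a+2}$ there. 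In the range $\t\ge 1/n$, since $1-t\sim\t^2$, the contribution is controlled by $c\,(n\t)^{\a+3/2-\d-j}\,n^{-1}\t^{-2\a-3}$, and the hypothesis $\d\ge\a+\b+2\ge\a+3/2$ (using $\b\ge-1/2$) together with $n\t\ge c$ forces $(n\t)^{\a+3/2-\d-j}$ to be uniformly bounded. This reproduces the claimed $n^{-1}(1-t)^{-(\a+3/2)}$ for every $j$, and summing over the finitely many $j\le J$ is harmless.

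The remaining piece is the tail $G_n^\d(t)=\sum_{j>J}d_j\,k_n^{\d+j}(w_{\a,\b},1,t)$. Here each kernel has Ces\`aro order $\d+j\ge\d$ and so obeys an endpoint estimate no worse than the target, while $|d_j|\le c\,j^{-\a-\b-\d-4}$ decays fast enough to sum; choosing $J$ large relative to the few powers of $j$ that surface, the series converges and remains within $c\,n^{-1}(1-t+n^{-2})^{-(\a+3/2)}$. I expect the main obstacle to be the uniform bookkeeping of the two Jacobi regimes across all $j$ and the verification that the borderline exponent $\a+3/2-\d$ is nonpositive \emph{exactly} under the stated hypothesis; the positivity assertion itself I would not reprove but cite, since it sits deeper than the asymptotics and is the conceptual core of the lemma.
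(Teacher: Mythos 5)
The first thing to note is that the paper does not prove this lemma at all: it is quoted from the literature (Bonami--Clerc \cite{BC} and Colzani--Taibleson--Weiss \cite{CTW}), exactly as you chose to do for the positivity half. So your decision to cite nonnegativity rather than reprove it is consistent with the paper's own treatment. Moreover, your analysis of the main terms in \eqref{eq:knd-sum} is correct: with $|b_j|\le c n^{\a+1-\d-j}$ and the bound \eqref{Est-Jacobi} of Lemma \ref{lem:3.2} applied with first parameter $\a+\d+j+1$, the ratio of the $j$-th term to the target $n^{-1}(1-t+n^{-2})^{-(\a+3/2)}$ is $\bigl(n\sqrt{1-t+n^{-2}}\,\bigr)^{\a+\f32-\d-j}\le 1$, precisely because $\d\ge\a+\b+2\ge\a+\f32$ when $\b\ge-\f12$. (A small omission: for $t\in[-1,0]$ one needs the reflection $P_n^{(a,\b)}(t)=(-1)^nP_n^{(\b,a)}(-t)$, and there it is the full hypothesis $\d\ge\a+\b+2$, not merely $\d\ge\a+\f32$, that saves the $j=0$ term; this is routine.)

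The genuine gap is the tail. Your claim that each $k_n^{\d+j}$, $j>J$, ``obeys an endpoint estimate no worse than the target'' is exactly an instance of the lemma you are trying to prove: every order $\d+j$ satisfies the hypothesis $\d+j\ge\a+\b+2$, so the asserted bound for those kernels \emph{is} Lemma \ref{lem:knd-est} at higher order, which you have not established. The decomposition of Lemma \ref{lem:knd-sum} only reduces the pointwise estimate at order $\d$ to the same estimate at orders $\d+j$ with $j>J$; iterating the decomposition produces an infinite regress, and the constants in Szeg\H{o}'s expansion are not tracked uniformly in the order, so the loop cannot be closed that way. Positivity does not rescue the step either: nonnegativity of $k_n^{\d+j}$ gives the normalization $\int_{-1}^1 k_n^{\d+j}(t,1)\,w_{\a,\b}(t)\,\dd t=1$ for free, which is why the tail is harmless in $L^1$ statements such as Szeg\H{o}'s Theorem 9.1.3, but it yields no pointwise control; and the crude term-by-term bound of a Ces\`aro kernel (ignoring cancellation in $k$) loses a factor of order $n\sqrt{1-t}$ against the target. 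Some genuinely independent input is needed to break the circle --- an explicit formula or a direct Darboux-asymptotics-plus-summation-by-parts argument at a single super-critical index, or simply the citation the paper itself uses. It is telling that the paper's Section 3 invokes Lemma \ref{lem:knd-est} precisely to control the tail $G_n^\d$ in the proof of \eqref{eq:estKnd}: that move is legitimate there because the lemma is taken as known from \cite{BC} and \cite{CTW}, and it is the one move unavailable to you inside the lemma's own proof.
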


We shall use \eqref{eq:knd-sum} to write $K_n^\d(h_\k^2)$ as two terms. For the second term, we 
use the above lemma to estimate the $G_n^\d$ term, which is relatively easy to handle. The main 
effort in estimating the first term lies in the proof of the following theorem. 

\begin{thm} \label{thm:estimate}
Let $\k > 0$ and let $\varphi$ be a $C^\infty$ function on $\CT^d$. If $\a \ge \b$ and $\a \ge (d-1)\k - \f12$,
then  
\begin{align}\label{eq:estimate}
\left|\int_{\CT^d} P_n^{(\a,\b)}(x_1 t_0 + x_2 t_1 + \cdots + x_d t_{d-1}) \varphi(t) (t_0 t_1 \cdots t_{d-1})^{k-1} 
   \dd t \right| \\
  \qquad\qquad\qquad 
    \le c n^{-(d-1)\k - \f12} \sum_{i=1}^d  
        \frac{  {\prod_{j=1, j \ne i }^d |x_j - x_i|^{-\k}}}{\big(\sqrt{1-|x_i|} + n^{-1}\big)^{\a+\f12 - (d-1)\k}}. \notag
\end{align}
\end{thm}
 
The proof of this theorem is technical and will be given in the next section. In the rest of this subsection 
we use this theorem to provide a proof of Theorem \ref{thm:cesaro-sd}, which relies on the following
proposition.  

\begin{prop}
Let $\k > 0$. Then  
\begin{align}\label{eq:estKnd}
 \left | K_n(h_\k^2; x, e_\ell)\right| \le & \, c n^{\l_\k - (d-1)\k - \d} \sum_{i=1}^d  
        \frac{  {\prod_{j=1, j \ne i }^d |x_j - x_i|^{-\k}}}{\big(\sqrt{1-|x_i|} + n^{-1}\big)^{\l_\k - (d-1)\k + \d +1}} \\
           & +  c n^{-1} V_\k \left[ \left (1- \la \cdot, e_\ell \ra + n^{-2}\right )^{-(\l_\k+1)}\right] (x). \notag
\end{align}
\end{prop}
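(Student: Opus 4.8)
The plan is to first reduce the kernel to a single integral over $\CT^d$, then split it by Szeg\H{o}'s decomposition and treat the two resulting pieces separately. Since the reproducing kernels $P_m(h_\k^2;\cdot,\cdot)$ are symmetric, so is their Ces\`aro average $K_n^\d(h_\k^2;\cdot,\cdot)$, whence $K_n^\d(h_\k^2;x,e_\ell)=K_n^\d(h_\k^2;e_\ell,x)$. Applying \eqref{eq:SdKernel} with the arguments in this order gives
\[
  K_n^\d(h_\k^2;x,e_\ell)=V_\k\big[k_n^\d(w_{\l_\k};\la e_\ell,\cdot\ra,1)\big](x).
\]
The crucial observation is that $\la e_\ell,z\ra=z_\ell$ depends only on the $\ell$-th component, so the function to which $V_\k$ is applied is of a single component and \thmref{thm:VkSym} applies verbatim. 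With $f(s)=k_n^\d(w_{\l_\k};s,1)$ this yields
\[
  K_n^\d(h_\k^2;x,e_\ell)=c_\k\int_{\CT^d}k_n^\d\big(w_{\l_\k};x_1t_0+\cdots+x_dt_{d-1},1\big)\,t_{\ell-1}(t_0\cdots t_{d-1})^{\k-1}\,\dd t.
\]
Writing $w_{\l_\k}=w_{\a,\b}$ with $\a=\b=\l_\k-\f12$ and inserting \eqref{eq:knd-sum} expresses the integrand as $\sum_{j=0}^J b_j\,P_n^{(\a+\d+j+1,\b)}$ plus the tail $G_n^\d$, so by linearity $K_n^\d(h_\k^2;x,e_\ell)$ becomes a finite sum of simplex integrals of Jacobi polynomials plus one simplex integral of $G_n^\d$.

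For the Jacobi terms I would apply \thmref{thm:estimate} with $\varphi(t)=t_{\ell-1}$, which is $C^\infty$ on $\CT^d$, and parameters $\a'=\l_\k+\d+j+\f12$, $\b'=\l_\k-\f12$. These satisfy the hypotheses $\a'\ge\b'$ and $\a'\ge(d-1)\k-\f12$, since $\d+j+1>0$ and $\binom{d}{2}\k\ge(d-1)\k$ for $d\ge2$. The estimate \eqref{eq:estimate} supplies the factor $n^{-(d-1)\k-\f12}$ and denominator exponent $\a'+\f12-(d-1)\k=\l_\k-(d-1)\k+\d+j+1$, while \lemref{lem:knd-sum} gives $|b_j|\le cn^{\a+1-\d-j}=cn^{\l_\k+\f12-\d-j}$. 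Multiplying, the $j$-th term is at most
\[
  cn^{\l_\k-(d-1)\k-\d-j}\sum_{i=1}^d\frac{\prod_{j'\ne i}|x_{j'}-x_i|^{-\k}}{\big(\sqrt{1-|x_i|}+n^{-1}\big)^{\l_\k-(d-1)\k+\d+j+1}}.
\]
Because $\sqrt{1-|x_i|}+n^{-1}\ge n^{-1}$ we have $n^{-j}\big(\sqrt{1-|x_i|}+n^{-1}\big)^{-j}\le1$, so every $j\ge1$ term is dominated termwise by the $j=0$ term; summing the finitely many $j$ produces the first term of \eqref{eq:estKnd}.

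For the remainder I would fix $J$ large enough that $\d+j\ge\a+\b+2=2\l_\k+1$ for all $j\ge J+1$, so that \lemref{lem:knd-est} applies to each $k_n^{\d+j}(w_{\a,\b};\cdot,1)$ and, using $\a+\f32=\l_\k+1$, gives $0\le k_n^{\d+j}(w_{\a,\b};t,1)\le cn^{-1}(1-t+n^{-2})^{-(\l_\k+1)}$. Since the coefficients satisfy $|d_j|\le cj^{-2\l_\k-\d-3}$, which is summable, this yields the pointwise bound $|G_n^\d(t)|\le cn^{-1}(1-t+n^{-2})^{-(\l_\k+1)}$. Inserting it under the nonnegative weight $t_{\ell-1}(t_0\cdots t_{d-1})^{\k-1}$ and recognizing the resulting integral, via \eqref{eq:Vk} applied to the single-component function $s\mapsto(1-s+n^{-2})^{-(\l_\k+1)}$, as $cn^{-1}V_\k[(1-\la\cdot,e_\ell\ra+n^{-2})^{-(\l_\k+1)}](x)$ gives precisely the second term of \eqref{eq:estKnd}. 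Adding the two contributions completes the argument.

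The deep analytic input is \thmref{thm:estimate} itself, whose proof is deferred to the next section; granting it, the two genuine subtleties here are the bookkeeping in the Jacobi step — namely verifying that the gain $n^{-j}$ carried by $b_j$ exactly offsets the extra power of $\big(\sqrt{1-|x_i|}+n^{-1}\big)^{-1}$ in the denominator, so that the higher-order terms do not spoil the bound — and the choice of $J$ that simultaneously makes the $d_j$ series converge and pushes every tail index $\d+j$ into the range $\d+j\ge\a+\b+2$ where \lemref{lem:knd-est} delivers both nonnegativity and the clean $(\l_\k+1)$-power decay.
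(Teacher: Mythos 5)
Your proposal is correct and takes essentially the same route as the paper: the same reduction of $K_n^\d(h_\k^2;x,e_\ell)$ to a simplex integral via \eqref{eq:SdKernel} and Theorem~\ref{thm:VkSym}, the same decomposition from Lemma~\ref{lem:knd-sum}, Theorem~\ref{thm:estimate} with $\varphi(t)=t_{\ell-1}$ applied to each Jacobi term against the $b_j$ bounds, and the same treatment of $G_n^\delta$ via Lemma~\ref{lem:knd-est}, summability of the $d_j$, and \eqref{eq:Vk}. The only differences are expository: you spell out the kernel symmetry used to place $V_\k$ on the $x$ variable and the termwise domination of the $j\ge1$ terms by the $j=0$ term (via $n^{-j}\bigl(\sqrt{1-|x_i|}+n^{-1}\bigr)^{-j}\le1$), both of which the paper leaves implicit.
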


\begin{proof}
By \eqref{eq:SdKernel} and the integral representation of $V_\k$ in \eqref{eq:Vk}, we obtain
$$
  K_n^\d(h_\k^2; x,e_\ell) := c_\k \int_{\CT^d} k_n^\d(w_{\l_\k}; t_0 x_1+ \cdots + t_{d-1} x_d) t_{\ell-1} 
     (t_0 \cdots t_{d-1})^{\k-1} \dd t. 
$$
We replace the kernel $k_n^\d(w_{\l_\k}) = k_n^\d(w_{\l_\k-\f12,\l_\k-\f12})$ by the expansion in 
Lemma \ref{lem:knd-sum}. Let $J = \lfloor 2 \l_\k+1\rfloor$. Then 
\begin{equation*}
K_n^\d(h_\k^2; x,e_\ell) = \sum_{j=0}^J b_j\!\left(\l_\k-\tfrac12, \l_\k-\tfrac12, \delta,n\right) \Omega_j(x) + \Omega(x),
\end{equation*}
where
\begin{equation*} 
\Omega_j (x) =  c_\k \int_{\CT^d} P_n^{(\l_\k+\d + j + \f12,\l_\k-\f12)}(t_0 x_1+ \cdots + t_{d-1} x_d) t_{\ell-1} 
     (t_0 \cdots t_{d-1})^{\k-1} \dd t  
\end{equation*}
and
\begin{equation*} \label{eq:3.6}
\Omega(x) =  c_\k \int_{\CT^d} G_n^\delta (t_0 x_1+ \cdots + t_{d-1} x_d) t_{\ell-1} 
     (t_0 \cdots t_{d-1})^{\k-1} \dd t.
\end{equation*}
For $\Omega_j$, we apply the estimate \eqref{eq:estimate} with $\varphi(t) = t_{\ell-1}$, $\a = \l_\k+\d + j + \f12$
and $\b=\l_\k-\f12$. Together with the estimate of $b_j\!\left(\l_\k-\tfrac12, \l_\k-\tfrac12, \delta,n\right)$, we obtain
that $|b_j\!\left(\l_\k-\tfrac12, \l_\k-\tfrac12, \delta,n\right) \Omega_j(x)|$ is bounded by the first term in the righthand
side of \eqref{eq:estKnd}, hence, so is the sum of these terms over $0\le j \le J$. Our choice of $J$ allows us to use 
the estimate in Lemma \ref{lem:knd-est} to obtain  
$$
      \left |G_n^\delta(t)\right | \le  c n^{-1} \left (1-t+n^{-2}\right )^{\l_\k+1},
$$ 
where we have used the fact that $\sum_{j=J+1}^\infty | d_j(\l_\k-\tfrac12, \l_\k-\tfrac12, \delta,n) |$
is bounded. Consequently, by \eqref{eq:Vk}, it follows that $|\Omega(x)|$ is bounded by the second term in the
righthand side of \eqref{eq:estKnd}, This completes the proof. 
\end{proof}

\medskip
\noindent
{\it Proof of Theorem \ref{thm:cesaro-sd}}. 
By \eqref{eq:conv-iff}, we need to prove that 
$$
    I_n: = \int_{\sph} \left | K_n^\d(h_\k^2; x,e_\ell) \right | h_\k^2(x) \dd\s(x) < \infty
$$
if $\d > \l_\k - (d-1)\k$. By \eqref{eq:estKnd}, $I_n$ is bounded by 
\begin{align}\label{eq:I_n}
   I_n \le \sum_{i=1}^d I_{n,i} +  c n^{-1}   \int_{\sph} V_\k \left[ \left (1- \la \cdot, e_\ell \ra + n^{-2}\right )^{-(\l_\k+1)}\right] (x)      
   h_\k^2(x) \dd\s(x),
\end{align}
where $I_{n,i}$ is defined by 
\begin{align*}
 I_{n,i} = c n^{\l_\k - (d-1)\k - \d}  \int_{\sph}
        \frac{  {\prod_{j=1, j \ne i }^d |x_j - x_i|^{-\k}}}{\big(\sqrt{1-|x_i|} + n^{-1}\big)^{\l_\k - (d-1)\k + \d +1}}  h_\k^2(x) \dd\s(x). \end{align*}
For the second term in the righthand side of \eqref{eq:I_n}, we use \eqref{eq:intV} to bounded it by
\begin{align*}
  c n^{-1} \int_{-1}^1 & \frac{(1-t^2)^{\l_\k- \f12}} {(1-t+n^{-2})^{\l_\k+1} }\dd t 
     =  c n^{-1} \int_0^\pi \frac{(\sin \t)^{2 \l_\k}}{(1-\cos \t + n^{-2})^{\l_\k+1}} \dd\t \\
   & \sim n^{-1} \int_0^{\pi/2} \frac{\t^{2 \l_\k}}{( \t^2 + n^{-2})^{\l_\k+1}} \dd\t
     = \int_0^{n \pi/2} \frac{s^{2\l_\k}} {(1+s^2)^{\l_\k+1}} \dd s < \infty.  
\end{align*} 
Next we estimate $I_{n,i}$ for $1\le i \le d$. If $j \ne i$ and $k \ne i$, then, for $x \in \sph$, 
$$
|x_j - x_k|^2 \le 2(x_j^2+x_k^2) \le 2 \sum_{j\ne i} x_j^2 = 2 (1-x_i^2) \le 4 (1-|x_i|),
$$
so that $|x_j - x_k| \le 2 \sqrt{1-|x_i|}$. Hence, by $h_\k^2(x) = \prod_{1\le j < k \le d} |x_j-x_k|^{2\k}$, we obtain
\begin{align*}
  I_{n,i} \le &\, c n^{\l_\k - (d-1)\k - \d}  \int_{\sph} \frac{  {\prod_{j \ne i, k \ne i} |x_j - x_k|^{2\k}}} 
          {\big(\sqrt{1-|x_i|} + n^{-1}\big)^{\l_\k - (d-1)\k + \d +1}}  \dd\s(x)  \\
           \le &\, c n^{\l_\k - (d-1)\k - \d}  \int_{\sph} \frac{1} 
                    {\big(\sqrt{1-|x_i|} + n^{-1}\big)^{\l_\k - (d-1)\k + \d +1 - 2 \binom{d-1}{2} \k}} \dd\s(x).
\end{align*}
Since $\l_\k - (d-1)\k = \binom{d-1}{2} \k + \f{d-2}{2}$,  the last integral can be rewritten to give
\begin{align*}
     I_{n,i} &\le  \,c n^{d-1}  \int_{\sph} \frac{1}{\big(1+ n \sqrt{1-|x_i|} \big)^{\d - \binom{d-1}{2} \k + \frac{d}{2}} } \dd\s(x) \\
         & \sim  c n^{d-1} \int_0^{\pi/2} \frac{(\sin\t)^{d-2}} {(1+n\t)^{\d - \binom{d-1}{2} \k + \frac{d}{2}}} \dd\t,
\end{align*}
where we have used the spherical coordinates by choosing $x_i = \cos \t$. Using $\sin \t \sim \t$ and changing 
variable $t= n\t$, it is easy to see that the last integral is bounded if and only if $\d > \binom{d-1}{2} \k + \f{d-2}{2}$.
This completes the proof. \qed

\medskip

We conjecture that the condition \eqref{eq:cesaro-sd} in Theorem \ref{thm:cesaro-sd} is sharp; that is,
$S_n^\d(h_\k^2; e_\ell)$ does not converge if $\d = \l_\k - (d-1)\k$. More precisely, we expect the inequality 
\begin{align*}
 \int_{\sph} & \left| \int_{\CT^d}   P_n^{(\a,\b)}(x_1 t_0 + x_2 t_1 + \cdots + x_d t_{d-1})t_0(t_0 t_1 \cdots t_{d-1})^{k-1} 
   \dd t \right| h_\k^2(x) \dd \s(x)\\
   & \qquad\qquad\qquad\qquad \qquad\qquad \qquad\qquad \qquad\qquad  
    \ge c n^{-(d-1)\k - \f12} \log n  
\end{align*}
to hold for $\a = \l_\k + \d +\f12$, $\b= \l_\k - \f12$ and $\d = \l_\k - (d-1)\k$, which would prove the sharpness of
the condition by Lemma \ref{lem:knd-sum}. Furthermore, taking the cue from the classical Fourier-Jacobi series
and spherical $h$-harmonic series with $G= \ZZ_2^d$, we expect that the $(C,\delta)$ means $S_n^\d(h_\k^2; f)$, 
with $h_\k$ in \eqref{eq:hk}, converge for the same $\delta$ that ensures the convergence at the coordinates 
vectors. In other words, we conjecture that the condition $\d > \l_\k - (d-1)\k$ is the necessary and sufficient 
condition for the convergence of $S_n^\d(h_\k^2; f)$ in $\|\cdot\|_{p,\k}$ for $p=1$ and $\infty$. 
 
\section{Proof of Theorem \ref{thm:estimate}}
\setcounter{equation}{0}

The proof is based on a lemma established in \cite{DaiX1}. First we need a definition.  

\begin{defn}
Let $n, v\in\mathbb{N}_0$. A function $f:[-1,1] \to \mathbb{R}$ is said to be in class 
$\mathcal {S}^v_n (\mu)$, if there exist functions $F_j$, $j=0,1,\cdots, v$ on $[-1, 1]$ 
such that $F_j^{(j)}(x)=f(x)$, $x\in [-1,1]$, $0\leq j\leq v$, and 
\begin{equation}\label{2-1}
|  F_j(x)| \le c n^{-2j} \left(1+n \sqrt{1-|x|}\right)^{-\mu-\f12+j},\ \ \ x\in [-1,1],\   \    \  j=0,1,\cdots, v.
\end{equation}
\end{defn}

This definition is motivated by the following two properties of the Jacobi polynomials. The first one is
the well-known identity
\begin{equation} \label{D-Jacobi}
    P_n^{(\a+1, \b+1)}(y) = \frac{2}{n+\a+\b+2} \frac{d}{dy} P_{n+1}^{(\a, \b)}(y)
\end{equation}
and the second one is the pointwise estimate of the Jacobi polynomials in the lemma below (\cite[(7.32.5) and (4.1.3)]{Sz}).

\begin{lem} \label{lem:3.2}
For an arbitrary real number $\alpha$ and $t \in [0,1]$,
\begin{equation} \label{Est-Jacobi}
|P_n^{(\alpha,\beta)} (t)| \le c n^{-1/2} (1-t+n^{-2})^{-(\alpha+1/2)/2}.
\end{equation}
The estimate on $[-1,0]$ follows from the fact that $P_n^{(\alpha,\beta)} (t) = (-1)^nP_n^{(\beta, \alpha)} (-t)$.
\end{lem}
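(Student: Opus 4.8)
The plan is to prove \eqref{Est-Jacobi} by the substitution $t=\cos\theta$ with $\theta\in[0,\pi/2]$ (so that $t$ ranges over $[0,1]$), splitting the $\theta$-range into a boundary layer near $\theta=0$ and a bulk region, and invoking in each region one of the two classical estimates of Szeg\H{o} cited in the statement. The elementary fact that glues the two pieces into a single bound is the comparison $1-t=1-\cos\theta=2\sin^2(\theta/2)\sim\theta^2$ on $[0,\pi/2]$, so that $\theta\sim\sqrt{1-t}$; in particular, on the threshold $\theta\sim 1/n$ one has $1-t\sim n^{-2}$, hence $1-t+n^{-2}\sim n^{-2}$ throughout the boundary layer and $1-t\sim 1-t+n^{-2}$ throughout the bulk.

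In the bulk region $c/n\le\theta\le\pi/2$, the estimate \cite[(7.32.5)]{Sz} gives $|P_n^{(\alpha,\beta)}(\cos\theta)|\le c\,n^{-1/2}(\sin(\theta/2))^{-\alpha-1/2}(\cos(\theta/2))^{-\beta-1/2}$. On $[0,\pi/2]$ the factor $(\cos(\theta/2))^{-\beta-1/2}$ is bounded above and below, which is why the final bound is insensitive to $\beta$; the remaining factor satisfies $(\sin(\theta/2))^{-\alpha-1/2}\sim\theta^{-\alpha-1/2}=(\theta^2)^{-(\alpha+1/2)/2}\sim(1-t)^{-(\alpha+1/2)/2}$. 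Using $1-t\sim 1-t+n^{-2}$ in this region then yields precisely $|P_n^{(\alpha,\beta)}(t)|\le c\,n^{-1/2}(1-t+n^{-2})^{-(\alpha+1/2)/2}$.

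In the boundary layer $0\le\theta\le c/n$, i.e. $1-t\lesssim n^{-2}$, the target bound reduces to $|P_n^{(\alpha,\beta)}(t)|\le c\,n^{\alpha}$, since there $n^{-1/2}(n^{-2})^{-(\alpha+1/2)/2}=n^{\alpha}$. To establish this uniform bound I would start from $P_n^{(\alpha,\beta)}(1)=\binom{n+\alpha}{n}\sim n^{\alpha}$ from \cite[(4.1.3)]{Sz} and then use the Mehler--Heine asymptotic $P_n^{(\alpha,\beta)}(\cos(z/n))\sim n^{\alpha}(z/2)^{-\alpha}J_\alpha(z)$, valid uniformly for $z$ in a fixed interval $[0,c]$; since $(z/2)^{-\alpha}J_\alpha(z)$ stays bounded on $[0,c]$ (its limit as $z\to0$ is $1/\Gamma(\alpha+1)$), the desired $|P_n^{(\alpha,\beta)}(\cos\theta)|\le c\,n^{\alpha}$ follows for all $\theta=z/n\le c/n$.

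Combining the two regions proves \eqref{Est-Jacobi} for $t\in[0,1]$, and the extension to $t\in[-1,0]$ is immediate from the reflection identity $P_n^{(\alpha,\beta)}(t)=(-1)^n P_n^{(\beta,\alpha)}(-t)$ already recorded in the statement. Since both ingredients are classical, the real content is only the bookkeeping of the transition at $\theta\sim 1/n$; the main (and only mildly delicate) obstacle is justifying the endpoint estimate $|P_n^{(\alpha,\beta)}(\cos\theta)|\le c\,n^{\alpha}$ uniformly across the boundary layer for \emph{arbitrary} real $\alpha$, where monotonicity toward $P_n^{(\alpha,\beta)}(1)$ is unavailable and one must lean on the uniform Bessel/Mehler--Heine behavior rather than a crude comparison with the endpoint value.
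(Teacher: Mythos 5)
Your argument is correct, and it is essentially the argument the paper relies on: the paper offers no proof of its own, citing exactly the two Szeg\H{o} facts, and your write-up is a faithful reconstruction of how those facts glue together via $\theta\sim\sqrt{1-t}$ and the threshold $\theta\sim 1/n$. One remark on the boundary layer: Szeg\H{o}'s (7.32.5) is a two-branch estimate, giving $P_n^{(\alpha,\beta)}(\cos\theta)=\theta^{-\alpha-1/2}O(n^{-1/2})$ for $cn^{-1}\le\theta\le\pi-\epsilon$ \emph{and} $P_n^{(\alpha,\beta)}(\cos\theta)=O(n^{\alpha})$ for $0\le\theta\le cn^{-1}$, both for arbitrary real $\alpha,\beta$; so the bound $|P_n^{(\alpha,\beta)}(\cos\theta)|\le cn^{\alpha}$ in the layer, which you identified as the only delicate point, can simply be quoted rather than re-derived. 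Your detour through the Mehler--Heine formula (Szeg\H{o} Theorem 8.1.1, valid for arbitrary real $\alpha,\beta$, uniformly on bounded $z$-sets, with $(z/2)^{-\alpha}J_\alpha(z)$ entire in $z^2$ and hence bounded on $[0,c]$) is a correct proof of that branch --- indeed it is essentially how the branch is established --- so nothing is lost, only a known inequality re-proved. Two citation nits: in Szeg\H{o}'s numbering, (4.1.3) is the reflection identity $P_n^{(\alpha,\beta)}(-x)=(-1)^nP_n^{(\beta,\alpha)}(x)$ used for $t\in[-1,0]$, while $P_n^{(\alpha,\beta)}(1)=\binom{n+\alpha}{n}$ is (4.1.1); and the form of the bulk estimate with the factors $(\sin(\theta/2))^{-\alpha-1/2}(\cos(\theta/2))^{-\beta-1/2}$ is stated only for $\alpha,\beta\ge-\tfrac12$, whereas for arbitrary real $\alpha$ one should quote the $\theta^{-\alpha-1/2}O(n^{-1/2})$ form --- on $[0,\pi/2]$ the two coincide up to constants, as you note, so this does not affect the argument.
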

 
In particular, these two properties show that $n^{-\a} P_n^{(\a,\b)} \in \CS_n^v(\a)$ for all $v \in \NN_0$. We
can now state our main lemma. 

\begin{lem} \label{lem:ineq-main}
Let $\k > 0$. For a fixed $b$ with $0 < b <1$, let $\xi\in C^\infty [0,1]$ be such that $\text{supp}\ \xi \subset [0, b]$. 
Let $f\in \mathcal {S}^v_n (\mu)$ with $v \ge |\mu| + 2\k +\f32$. Assume $|x| \le 1$ and $|x+ a t| \le 1$ for $t\in [0,1]$.
 Then,  
\begin{align}\label{eq:ineq-main}
 \left | \int_{0}^1 f(x+ a t) t^{\k-1} \xi(t)\, dt\right |
   \le  c n^{-2\k} |a|^{-\k} \left(1+n \sqrt{1-|x|}\right)^{-\mu-\f12+\k}.
\end{align}
\end{lem}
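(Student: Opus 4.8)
The plan is to prove \lemref{lem:ineq-main} by repeated integration by parts, exploiting the antiderivative structure encoded in the class $\CS_n^v(\mu)$ to trade each integration by parts for a factor of $n^{-2}$ coming from the bound \eqref{2-1}. The key obstacle is the singular weight $t^{\k-1}$ at the origin, which is not integrated away cleanly by integrating by parts against $f$; so the first step is to separate the problem into a region near $t=0$, where the singularity lives, and a region bounded away from $0$, where the function $f$ is genuinely smooth and we may integrate by parts freely.

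\medskip

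First I would introduce a smooth partition of the interval adapted to the scale $n^{-1}$. Concretely, pick a cutoff $\eta \in C^\infty[0,\infty)$ with $\eta \equiv 1$ on $[0,1]$ and $\eta \equiv 0$ on $[2,\infty)$, and split
\begin{equation*}
\int_0^1 f(x+at)\, t^{\k-1}\xi(t)\,dt = \int_0^1 f(x+at)\,t^{\k-1}\xi(t)\,\eta(nt)\,dt + \int_0^1 f(x+at)\,t^{\k-1}\xi(t)\,(1-\eta(nt))\,dt,
\end{equation*}
so that the first integral is supported on $t \lesssim n^{-1}$ and the second on $t \gtrsim n^{-1}$. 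For the near-origin piece I would use the crudest bound: on its support $t \le 2/n$, the estimate \eqref{2-1} with $j=0$ gives $|f(x+at)| \le c\,(1+n\sqrt{1-|x+at|})^{-\mu-\f12}$, and since $|at| \le 2|a|/n$ is small we can compare $1-|x+at|$ with $1-|x|$ up to the scale $n^{-2}$; integrating $t^{\k-1}$ over $[0,2/n]$ contributes $\k^{-1}(2/n)^{\k}$, which already produces the desired factor $n^{-\k}$. Here the $|a|^{-\k}$ is extracted by the change of variable $s = at$ that matches the target form.

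\medskip

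For the main piece, supported on $t \ge 1/n$, I would integrate by parts $v$ times, each time moving a derivative off $t^{\k-1}\xi(t)(1-\eta(nt))$ and onto an antiderivative $F_j$ of $f$ supplied by the definition of $\CS_n^v(\mu)$. Differentiating the weight $t^{\k-1}$ costs a factor $t^{-1} \lesssim n$, differentiating $\eta(nt)$ costs $n$, while each antiderivative $F_j$ gains the factor $n^{-2j}$ from \eqref{2-1}; the net gain per step is thus $n^{-2}\cdot n = n^{-1}$, so after integrating by parts roughly $2\k + 1$ times we beat the $t^{\k-1}$ singularity and the boundary terms, which live near $t \sim n^{-1}$, are controlled by the same scaling. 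The condition $v \ge |\mu| + 2\k + \f32$ is exactly what guarantees enough antiderivatives $F_j$ are available to carry out this process and to absorb the $(1+n\sqrt{1-|x+at|})$ factors uniformly. The hard part will be bookkeeping the mixed boundary and interior terms so that every contribution is dominated by $n^{-2\k}|a|^{-\k}(1+n\sqrt{1-|x|})^{-\mu-\f12+\k}$; in particular one must check that replacing $1-|x+at|$ by $1-|x|$ throughout introduces only acceptable errors, which follows because $|a|t$ ranges over a set of diameter comparable to the relevant scale and the exponent $-\mu-\f12+\k$ is handled by the monotonicity of $(1+n\sqrt{\cdot}\,)$.
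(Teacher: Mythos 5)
Your overall strategy---trading integrations by parts for factors of $n^{-2}$ via the antiderivatives $F_j$ supplied by the class $\CS_n^v(\mu)$---is the right spirit, and it is in essence how the result is proved in the literature; but two of your concrete steps fail. First, your splitting scale $t\sim n^{-1}$ is wrong: the critical scale is $t\sim (n^2|a|)^{-1}$. On your near piece $[0,2/n]$ you take absolute values, and no such argument can work. Your claim that $1-|x+at|$ and $1-|x|$ agree ``up to the scale $n^{-2}$'' there is false, since $|at|$ can be as large as $2|a|/n\gg n^{-2}$. Concretely, take $x=1$, $a=-2$ (allowed by the hypothesis $|x+at|\le 1$, though not by the stronger hypothesis $|x|\le 1-|a|$ of \cite[Lemma 3.4]{DaiX1}) and $f=n^{-\mu}P_n^{(\mu,\mu)}$, which the paper notes lies in $\CS_n^v(\mu)$ for every $v$. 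On $n^{-2}\le t\le 2/n$ the envelope of $|f(1-2t)|$ is comparable to $(1+n\sqrt{t}\,)^{-\mu-\f12}$, so
$$
\int_{n^{-2}}^{2/n}|f(1-2t)|\,t^{\k-1}\,\dd t \,\sim\, n^{-\k-(\mu+\f12)/2}
$$
whenever $\k>(\mu+\f12)/2$, and this exceeds the target $c\,n^{-2\k}$ (here $1-|x|=0$ and $|a|\sim 1$) by the unbounded factor $n^{\k-(\mu+\f12)/2}$. Thus whenever $\mu+\f12<2\k$ the oscillation of $f$ must be exploited all the way down to scale $n^{-2}$, which your crude near-origin estimate forfeits.

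Second, on the main piece your bookkeeping omits the factor $a^{-1}$ produced by every integration by parts: $\int f(x+at)g(t)\,\dd t=-a^{-1}\int F_1(x+at)g'(t)\,\dd t$ plus boundary terms. A fixed number $N\approx 2\k+1$ of integrations by parts over the single region $[n^{-1},b]$ yields a bound carrying $|a|^{-N}$, and $|a|^{-N}n^{-N-\k}\le |a|^{-\k}n^{-2\k}$ only when $|a|\gtrsim n^{-1}$; for $|a|\ll n^{-1}$ your bound exceeds the target. More fundamentally, the fractional power $|a|^{-\k}n^{-2\k}=(n^2|a|)^{-\k}$ cannot come out of a fixed number of integrations by parts on one region: it arises from a dyadic decomposition $t\sim\tau$, integrating by parts $N>\k$ times on each block to gain $\tau^{\k}(n^2|a|\tau)^{-N}$ times the local amplitude, and summing the resulting geometric series over dyadic $\tau\ge(n^2|a|)^{-1}$, which is dominated by the critical block. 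The same localization is also what permits tracking the local amplitude $(1+n\sqrt{1-|x+at|}\,)^{-\mu-\f12+j}$; replacing it by its worst case over $[n^{-1},b]$, as a single-region argument forces, destroys the factor $(1+n\sqrt{1-|x|}\,)^{-\mu-\f12+\k}$. This machinery is exactly the content of \cite[Lemma 3.4]{DaiX1}, which is why the paper does not reprove it: its entire proof of Lemma \ref{lem:ineq-main} is the change of variable $t\mapsto 1-t$ followed by an appeal to that lemma, together with the observation that the hypothesis $|x|\le 1-|a|$ there can be relaxed to $|x+at|\le 1$ upon inspecting its proof.
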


\begin{proof}
Changing variable $t \mapsto 1-t$, the integral becomes
$$
\int_{0}^1 f(x+ a t) t^{\k-1} \xi(t)\, \dd t = \int_{0}^1 f(x+ a - a t) (1-t)^{\k-1} \xi(1-t)\, \dd t.
$$
We can then apply Lemma 3.4 of \cite{DaiX1} on the integral in the righthand side. 
\end{proof}

The statement of \cite[Lemma 3.4]{DX} is slightly more complicated, with an assumption that $|x| \le 1-|a|$, 
but a close look at the proof shows that it suffices to assume that $|x+ a t| \le 1$ for $t\in [0,1]$. This lemma is
used to prove the following estimate: 

\begin{lem} \label{lem:main}
Let $\k > 0$ and let $\xi$ be a $C^\infty(\CT^d)$ function such that its support set is 
$\{t \in \CT^d: t_0 \ge (2d)^{-1}\}$. Then, for $\a \ge \b$,  $\a \ge (d-1)\k - \f12$, 
\begin{align}\label{eq:main-lem}
\left|\int_{\CT^d} P_n^{(\a,\b)}(x_1 t_0 + x_2 t_1 + \cdots + x_d t_{d-1}) \xi(t)  (t_0 t_1 \cdots t_{d-1})^{k-1} 
   \dd t \right| \\
  \qquad\qquad\qquad 
    \le c n^{-2(d-1)\k} \frac{\prod_{j=2}^d |x_j - x_1|^{-\k}}{\big(\sqrt{1-|x_1|} + n^{-1}\big)^{\a+\f12 - (d-1)\k}}. \notag
\end{align}
 \end{lem}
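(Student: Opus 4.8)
The plan is to reduce the multidimensional integral in \eqref{eq:main-lem} to an iterated one-dimensional integral of the form handled by \lemref{lem:ineq-main}, integrating out the variables $t_1,\ldots,t_{d-1}$ one at a time. The key observation is that $n^{-\a}P_n^{(\a,\b)} \in \CS_n^v(\a)$ for every $v$, as noted after \lemref{lem:3.2}, so the Jacobi polynomial (after the scaling by $n^{-\a}$) is exactly the type of function $f$ to which \lemref{lem:ineq-main} applies. Because the support of $\xi$ forces $t_0 \ge (2d)^{-1}$, the weight $t_0^{\k-1}$ is bounded above and below by constants on the support, so it can be absorbed into the $C^\infty$ cutoff without affecting the singularity structure; the genuinely singular weights are $t_1^{\k-1},\ldots,t_{d-1}^{\k-1}$.

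First I would freeze all variables except $t_{d-1}$ and view the argument $x_1 t_0 + \cdots + x_d t_{d-1}$ as a one-dimensional affine function of $t_{d-1}$. Writing $u = x_1 t_0 + \cdots + x_{d-1}t_{d-2} + x_d t_{d-1}$, a shift of $t_{d-1}$ changes $u$ at rate $a = x_d - x_1$ (since increasing $t_{d-1}$ by $\dd s$ forces $t_0$ to decrease by the same amount through the constraint $t_0 + \cdots + t_{d-1} = 1$). Applying \lemref{lem:ineq-main} with $f = n^{-\a}P_n^{(\a,\b)}$, $\mu = \a$, $|a| = |x_d - x_1|$, and the remaining factors folded into $\xi$, I obtain a bound of the shape $n^{-2\k}|x_d-x_1|^{-\k}$ times $(1 + n\sqrt{1-|x'|})^{-\a - \f12 + \k}$, where $x'$ is the value of the frozen argument at $t_{d-1}=0$. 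Crucially, the output of \lemref{lem:ineq-main} is again a function in a class $\CS_n^{v'}(\mu')$ with $\mu' = \a - \k$ in the remaining variable, so I can iterate: integrate out $t_{d-2}$ against its singular weight, then $t_{d-3}$, and so on. Each of the $d-1$ steps contributes one factor $n^{-2\k}$ and one factor $|x_j - x_1|^{-\k}$ while lowering the effective index $\mu$ by $\k$, which after $d-1$ steps produces exactly $n^{-2(d-1)\k}$, the product $\prod_{j=2}^d|x_j - x_1|^{-\k}$, and the final exponent $\a + \f12 - (d-1)\k$ in \eqref{eq:main-lem}.

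The main obstacle will be verifying that after each application of \lemref{lem:ineq-main} the resulting function of the remaining simplex variables genuinely lies in the next class $\CS_n^{v'}(\mu')$, so that the lemma can legitimately be reapplied. This requires tracking not just the pointwise bound \eqref{2-1} but the existence of the antiderivatives $F_j$ with the stated estimates after partial integration in one variable, and checking that the smooth cutoff and the support condition $t_0 \ge (2d)^{-1}$ are preserved (in particular that the hypotheses $|x| \le 1$ and $|x + at| \le 1$ remain valid, which follows since all partial sums of $x_i t_{i-1}$ with $\sum t_{i-1} \le 1$ stay in $[-1,1]$). The index condition $v \ge |\mu| + 2\k + \f32$ in \lemref{lem:ineq-main} must hold at every stage; since $\mu$ only decreases from $\a$ to $\a - (d-1)\k$ and $n^{-\a}P_n^{(\a,\b)}$ lies in $\CS_n^v(\a)$ for \emph{all} $v$, this is easily arranged by starting with $v$ large enough. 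The hypotheses $\a \ge \b$ and $\a \ge (d-1)\k - \f12$ enter to guarantee that the final exponent $\a + \f12 - (d-1)\k$ is nonnegative and that the symmetry $P_n^{(\a,\b)}(t) = (-1)^n P_n^{(\b,\a)}(-t)$ from \lemref{lem:3.2} can be used to control the argument on $[-1,0]$ with the correct (larger) parameter.
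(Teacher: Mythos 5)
Your proposal is correct and follows essentially the same route as the paper: iterate \lemref{lem:ineq-main} one singular variable at a time, absorb $t_0^{\k-1}$ into the smooth cutoff using the support condition $t_0 \ge (2d)^{-1}$, and close the induction by showing that each partially integrated function, suitably normalized, lies in the next class $\CS_n^{v}(\a-m\k)$ with antiderivatives built from shifted Jacobi polynomials via \eqref{D-Jacobi} --- exactly the induction the paper carries out with its functions $f_m$ and $F_{m,\ell}$. The only difference is cosmetic: the paper first maps the simplex to the cube $[0,1]^{d-1}$ by the substitution $t_1=u_1$, $t_j=(1-u_1)\cdots(1-u_{j-1})u_j$ so that every inner integral runs over a fixed interval, whereas you stay in simplex coordinates and let the cutoff's support control the variable upper limit; both implementations accumulate the same factor $n^{-2\k}|x_j-x_1|^{-\k}$ at each of the $d-1$ steps.
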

 
\begin{proof}
With $t_0 = 1-t_1-\cdots - t_{d-1}$ and $\eta(t_1,\ldots,t_{d-1}) = t_0^{\k-1} \xi(t)$. Then $\eta$ is a 
$C^\infty(T^{d-1})$ function and the integral over $\CT^d$ can be written as 
\begin{align*}
I(x): = &\int_{\CT^d}  P_n^{(\a,\b)}(x_1 t_0 + x_2 t_1 + \cdots + x_d t_{d-1}) \xi(t) (t_0 \cdots t_{d-1})^{k-1} \dd t \\
= & \int_0^1  \int_0^{1-t_1}  \cdots \int_0^{1-t_1-\cdots - t_{d-1}} 
    P_n^{(\a,\b)}\big(x_1+ (x_2-x_1) t_1 + \cdots + (x_d-x_1) t_{d-1} \big)\\
    &  \qquad\qquad\qquad\qquad\qquad\qquad\quad 
    \times \xi(t) t_{d-1}^{k-1} \cdots t_2^{\k-1}  t_1^{\k-1} \dd t_{d-1} \cdots \dd t_1.
\end{align*}
Changing variable $t\mapsto u$ in the integral with 
$$
t_1 = u_1,  \quad t_2=(1- u_1) u_2,\quad  \ldots,  \quad t_{d-1} = (1-u_1)\cdots (1-u_{d-2}) u_{d-1},
$$
and write $\xi(t) = \xi(t(u))$. Then $t \in T^{d-1}$ becomes $u \in [0,1]^{d-1}$. It is easy to verify that 
$t_0(u) = (1-u_1) \cdots (1-u_{d-1})$, so that $\xi(t(u))$ is zero only if $(1-u_1)\cdots (1-u_{d-1}) 
\le 1/(2d)$. In particular, if $1-u_j \le 1/(2d)$, or $u_j \ge1- 1/(2d)$, for some $j$, then $\xi(t(u) = 0$. Let 
$\chi$ be a $C^\infty$ function such that $\chi(u) = 1$ if $u \le 1-1/(2d)$ and $\chi(u) = 0$ if $u \ge 1-1/(3d)$. 
Then $\chi(u_1) \cdots \chi(u_{d-1})$ is equal to 1 over the support set of $\xi(t(u))$. Hence, we can write
\begin{align*}
  I(x) = & \int_{[0,1]^{d-1}} P_n^{(\a,\b)}\bigg(x_1+\sum_{j=1}^{d-1} (x_{j+1}-x_1) \prod_{i=2}^{j} (1-u_{i-1}) u_j\bigg)
    \xi (t(u)) \\
     & \times   \prod_{j=1}^{d-1} \chi(u_{j}) u_j^{\k-1} \prod_{i=2}^{j}(1-u_{i-1})^\k \dd u_{d-1}\cdots \dd u_1.
\end{align*}
where we adopt the convention $\prod_{i=2}^1(1-u_{i-1})=1$. 
For $2 \le m \le d-1$, we define 
\begin{align*}
  f_m (y) =   \int_{[0,1]^{m}} & P_n^{(\a,\b)}\bigg(y + \sum_{j=d-m}^{d-1} (x_{j+1}-x_1) \prod_{i=2}^j(1-u_{i-1}) u_j\bigg)
    \xi (t(u)) \\
     & \times \prod_{j=d-m}^{d-1} \chi(u_j) u_j^{\k-1} \prod_{i=2}^j(1-u_{i-1})^\k   \dd u_{d-1}\cdots \dd u_{d-m}
\end{align*}
for all $y$ such that the argument of $P_n^{(\a,\b)}$ is in $[-1,1]$. It is evident that $I(x) = f_{d-1}(x_1)$.
Furthermore, let $v_{m}: = \lfloor |\a- m \k| + 2\k \rfloor +3$ for $0 \le m \le d-1$. For $\ell =0,1,\ldots, v_{m}$, 
we define 
\begin{align*}
F_{m,\ell} (y) = B_{n,\ell}
 \int_{[0,1]^{m}} & P_{n+\ell}^{(\a-\ell,\b-\ell)}\bigg(y + \sum_{j=d-m}^{d-1} (x_{j+1}-x_1)\prod_{i=2}^j(1-u_{i-1}) u_j\bigg)
    \xi (t(u)) \\
     & \times \prod_{j=d-m}^{d-1} \chi(u_j) u_j^{\k-1} \prod_{i=2}^j(1-u_{i-1})^\k   \dd u_{d-1}\cdots \dd u_{d-m},
\end{align*}
where $B_{n,\ell} =   2^{\ell}/ \prod_{j=1}^{\ell} (n+\a+\b+1-j)$. By \eqref{D-Jacobi}, it follows readily that 
$F_{m,\ell}^{(\ell)} = f_m$. We now prove that $f_m$ satisfies the estimate
\begin{equation} \label{eq:fm-bound}
   |f_m(y)| \le c\, n^{\a -2 m \k}  \prod_{j= d-m+1}^d |x_j - x_1|^{-\k} \big(1+n \sqrt{1-|y|}\big)^{-\a-\f12 + m \k}
\end{equation}
for $m=1,2,\ldots, d-1$ and, moroever, 
\begin{equation} \label{eq:fm-bound2}
    n^{-\a + 2 m \k}  \prod_{j= d-m+1}^d |x_j - x_1|^{\k} f_m \in \CS_n^{v_m}(\a - m \k). 
\end{equation}

The proof is by induction. For $m=1$, write $a_{d-1} = \prod_{i=2}^{d-1}(1-u_{i-1})$. Since $\chi$ 
is supported on $[0, b]$, where $b=1- (3d)^{-1})$, and $ n^{-\a}P_n^{(\a,\b)} (x) \in \mathcal{S}_n^{v_0}(\a)$ 
for $v_0 = \lfloor |\a|+2\k \rfloor + 3$, we can apply Lemma \ref{lem:ineq-main} to obtain
\begin{align*} 
 |f_1(y)|  & = \,  a_{d-1}^\k \left | \int_0^{1}  P_n^{(\a,\b)} \big(y + a_{d-1} (x_d-x_1)u_{d-1} \big) 
     \xi(t(u)) \chi(u_{d-1}) u_{d-1}^{\k-1} \dd u_{d-1}   \right|  \\
      &  \le c\, n^{\a -2\k} \left | x_d-x_1 \right|^{-\k} \left(1+n \sqrt{1-|y|}\right)^{-\a-\f12+\k},
\end{align*}
which establishes \eqref{eq:fm-bound} for $m=1$. Similarly, since $B_{n,\ell} \sim n^{-\ell}$, the similar estimate
can be carried out for $F_{1,\ell}$ and we obtain, for $\ell =1,\ldots, v_1$, 
$$
  n^{-\a} |F_{1,\ell}(y)| \le c \left|x_d-x_1 \right|^{-\k} n^{-2\k-2\ell} \left(1+n \sqrt{1-|y|}\right)^{-\a-\f12+\k+\ell},
$$
which shows that \eqref{eq:fm-bound2} holds for $m=1$. Assume now that \eqref{eq:fm-bound} and 
\eqref{eq:fm-bound2} have been established for $f_{m-1}$. We now consider $f_m$. From the
definition of $f_m$, it is easy to see that 
\begin{align*}
   f_{m}(y) =a_{d-m}^\k \int_0^1 & f_{m-1} \bigg( y + (x_{d-m+1}-x_1)a_{d-m} u_{d-m}\bigg)\xi (t(u)) \\
      & \times  \chi(u_{d-m}) u _{d-m}^{\k-1}  \dd u_{d-m},
\end{align*}
where $a_{d-m}= \prod_{i=2}^{d-m}(1-u_{i-1})$ for $m=2,3,\ldots, d-1$, and similar iterative relations hold 
if we replace $f_{m}$ and $f_{m-1}$ by $F_{m,\ell}$ and $F_{m-1,\ell}$ for $1 \le \ell \le v_m$. Using the 
induction hypothesis, we can then apply Lemma \ref{lem:ineq-main} to obtain the estimate \eqref{eq:fm-bound}
for $f_m$, which can be carried out exactly as in the estimate of $f_1$, and similarly for $F_{m,\ell}$ to 
establish \eqref{eq:fm-bound2} for $f_m$. This completes the induction. 

Finally, it is easy to see that the desired estimate \eqref{eq:main-lem} is equivalent to the estimate 
for $f_{d-1}(x_1)$ in \eqref{eq:fm-bound}. This completes the proof. 
\end{proof}

The support set of $\xi$ in the theorem means that we are considering the simplex with one vertex chopped off.
The proposition below shows that this can be done one at a time. 

\begin{prop} \label{prop:partition}
There exist $C^\infty$ functions $\xi_0, \xi_1,  \ldots,  \xi_{d-1}$ on $\CT^d$ such that
$$
 \xi_0(t) + \cdots +\xi_{d-1}(t) = 1, \qquad \xi_0(t) \ge 0, \, \ldots, \,  \xi_{d-1}(t) \ge 0, \qquad t \in \CT^d,
$$
and the support set of $\xi_j$ is a subset of $\{t \in \CT^d: t_j \ge (2d)^{-1}\}$. 
\end{prop}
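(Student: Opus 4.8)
The goal is to construct a smooth partition of unity on the simplex $\CT^d$ subordinate to the open cover $\{U_j\}_{j=0}^{d-1}$, where $U_j = \{t \in \CT^d: t_j > (3d)^{-1}\}$, with the extra requirement that $\mathrm{supp}\,\xi_j \subset \{t \in \CT^d: t_j \ge (2d)^{-1}\}$. The plan is to build the $\xi_j$ explicitly from a single cutoff function of one real variable, rather than invoking an abstract partition-of-unity theorem, since the explicit form is cleaner and the required support condition falls out directly.

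First I would fix a cutoff $\psi \in C^\infty(\RR)$ with $0 \le \psi \le 1$, $\psi(s) = 0$ for $s \le (3d)^{-1}$ and $\psi(s) = 1$ for $s \ge (2d)^{-1}$. The crucial elementary observation is that the coordinates on $\CT^d$ sum to one, so at every point $t \in \CT^d$ at least one coordinate satisfies $t_j \ge d^{-1} > (2d)^{-1}$; consequently $\sum_{j=0}^{d-1} \psi(t_j) \ge 1 > 0$ everywhere on $\CT^d$. I would then define
\begin{equation*}
  \xi_j(t) = \frac{\psi(t_j)}{\sum_{k=0}^{d-1}\psi(t_k)}, \qquad 0 \le j \le d-1, \quad t \in \CT^d.
\end{equation*}
The denominator is a strictly positive $C^\infty$ function on a neighborhood of $\CT^d$, so each $\xi_j$ is $C^\infty$; nonnegativity is immediate from $\psi \ge 0$; and the normalization $\sum_j \xi_j \equiv 1$ holds by construction.

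For the support condition I would argue pointwise: if $\xi_j(t) \ne 0$ then $\psi(t_j) \ne 0$, which by the choice of $\psi$ forces $t_j > (3d)^{-1}$, hence $\mathrm{supp}\,\xi_j \subset \overline{\{t_j > (3d)^{-1}\}} \subset \{t_j \ge (2d)^{-1}\}$ provided I set the thresholds so that the closure lands inside the larger set. To make this clean I would instead take $\psi$ supported in $[(2d)^{-1},\infty)$, i.e. $\psi(s)=0$ for $s \le (2d)^{-1}$ and $\psi(s)=1$ for $s \ge (3d)^{-1}$ — but then positivity of the denominator must be rechecked. The safe choice is $\psi(s)=0$ for $s \le (3d)^{-1}$, $\psi(s)=1$ for $s \ge (2d)^{-1}$ with $(3d)^{-1} < (2d)^{-1} < d^{-1}$, which guarantees both the positivity of $\sum_k \psi(t_k)$ (some $t_k \ge d^{-1}$ gives $\psi(t_k)=1$) and the support inclusion $\mathrm{supp}\,\xi_j \subset \{t_j \ge (3d)^{-1}\}$; the statement's threshold $(2d)^{-1}$ is then met after relabeling, or one simply records the slightly larger support and notes it is harmless for the application in Lemma \ref{lem:main}.

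There is essentially no deep obstacle here — this is a standard partition-of-unity construction, and the only point requiring genuine care is verifying that the denominator never vanishes on $\CT^d$, which is exactly where the constraint $t_0 + \cdots + t_{d-1} = 1$ (forcing some coordinate to be at least $d^{-1}$) is used. The one bookkeeping subtlety is matching the two thresholds $(3d)^{-1}$ and $(2d)^{-1}$ so that both the positivity and the support requirements hold simultaneously; I would state the inequalities $(3d)^{-1} < (2d)^{-1} < d^{-1}$ once at the start so the reader sees why the overlap is enough.
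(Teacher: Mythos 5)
Your normalization construction $\xi_j(t) = \psi(t_j)/\sum_{k}\psi(t_k)$ is a genuinely different route from the paper's, and it can be made to work --- but not with the thresholds you settled on. Your ``safe choice'' ($\psi(s)=0$ for $s \le (3d)^{-1}$, $\psi(s)=1$ for $s \ge (2d)^{-1}$) only yields $\mathrm{supp}\,\xi_j \subset \{t \in \CT^d: t_j \ge (3d)^{-1}\}$, which is strictly weaker than the stated conclusion, and your attempted repair is self-contradictory: since $(3d)^{-1} < (2d)^{-1}$, there is no function with $\psi(s)=0$ for $s \le (2d)^{-1}$ and $\psi(s)=1$ for $s \ge (3d)^{-1}$. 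The slip is in assuming the two thresholds of $\psi$ must straddle $(2d)^{-1}$ from below. They need not: positivity of the denominator only requires $\psi > 0$ at the largest coordinate, which is always $\ge d^{-1}$, and $d^{-1} > (2d)^{-1}$ leaves room. So take $\psi \in C^\infty(\RR)$ with $0 \le \psi \le 1$, $\psi(s) = 0$ for $s \le (2d)^{-1}$ and $\psi(s) = 1$ for $s \ge d^{-1}$. Then on $\CT^d$ some coordinate $t_k$ satisfies $t_k \ge d^{-1}$, so $\sum_k \psi(t_k) \ge 1$; each $\xi_j$ is $C^\infty$ and nonnegative and $\sum_j \xi_j \equiv 1$; and $\{t: \psi(t_j) \ne 0\} \subset \{t: t_j > (2d)^{-1}\}$, whose closure lies in the closed set $\{t: t_j \ge (2d)^{-1}\}$. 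That proves the proposition exactly as stated, with no relabeling or weakening. (You are right that the precise constant is immaterial for Lemma \ref{lem:main} --- any fixed positive lower bound on $t_0$ over the support would do --- but there is no need to settle for that.)

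For comparison, the paper avoids the quotient altogether with a telescoping product: with a single cutoff $\xi$ vanishing on $[0,a]$, $a = (2d)^{-1}$, and equal to $1$ on $(b,1]$, $b = (1-a)/(d-1)$, it sets $\xi_0(t) = \xi(t_0)$, $\xi_j(t) = \bigl(1-\xi(t_0)\bigr)\cdots\bigl(1-\xi(t_{j-1})\bigr)\xi(t_j)$ for intermediate $j$, and $\xi_{d-1}(t) = \bigl(1-\xi(t_0)\bigr)\cdots\bigl(1-\xi(t_{d-2})\bigr)$, so the sum collapses to $1$ identically. There the constraint $t_0 + \cdots + t_{d-1} = 1$ enters only once, for the last piece: on its support all of $t_0,\ldots,t_{d-2}$ are $\le b$, hence $t_{d-1} \ge 1-(d-1)b = a$. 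Your approach uses the same constraint instead to keep the denominator bounded away from zero, and is more symmetric in the indices; the paper's construction has the small advantages of involving no division and of making each support set essentially explicit. Both arguments are elementary and of comparable length once the thresholds are chosen correctly, which is the one place your write-up genuinely goes wrong.
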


\begin{proof}
For $d \ge 2$, let $0 < a < b < 1$ be defined by 
$$
    a = \frac{1}{2d} \quad \hbox{and} \quad b = \frac{1-a}{d-1} = \frac{1}{2d} + \frac{1}{2(d-1)} < 1. 
$$ 
Let $\xi$ be a $C^\infty$ function on the real line such that $\xi(t) = 0$ if $0 \le t \le a$, and $\xi(t) =1$ 
if $b < t \le 1$. In particular, $\xi$ is supported on $(a,1]$ and $1-\xi$ is supported on $[0, b)$. For $t 
\in \CT^d$, we write $t=(t_0, t_1,\ldots, t_{d-1})$ in homogeneous coordinates, or $t_0 = 1- t_1-\ldots - t_{d-1}$. 
We define $C^\infty$ functions $\xi_0, \xi_1,\ldots, \xi_{d-1}$ by 
\begin{align*}
  \xi_0(t) \, & = \xi(t_0),  \\
  \xi_1(t) \, & = \big(1-\xi(t_0) \big) \xi(t_1),  \\
     & \cdots \cdots \\
  \xi_{d-2}(t) \, & = \big(1-\xi(t_0) \big) \cdots \big(1-\xi(t_{d-3})\big) \xi(t_{d-2}),\\
  \xi_{d-1}(t) \, & = \big(1-\xi(t_0) \big) \cdots \big(1-\xi(t_{d-3})\big) \big(1-\xi(t_{d-2}) \big).
\end{align*}
Then it is evident that $\xi_0(t) + \ldots + \xi_{d-1}(t) = 1$. Furthermore, it is easy to see that,
for $0 \le j \le d-2$, the support set of $\xi_j$ is $\{t \in \CT^d: t_0 \le b, \ldots, t_j \le b, \, \hbox{and} \,
t_j > a\}$, which is evidently a subset of $\{t\in \CT^d: t_j > a\}$. Moreover, the support set of 
$\xi_{d-1}$ is $\{t \in \CT^d: t_0 \le b, \ldots, t_{d-2} \le b\}$. Each element of this last set satisfies 
the inequality
$$
    t_{d-1} = 1-t_0-\ldots - t_{j-2} \ge 1-(d-1) b = 1-(1-a) = a
$$
by the definition of $b$, so that the subset of $\xi_{d-1}$ is a subset of $\{t\in \CT^d: t_{d-1}>a\}$. 
This completes the proof. 
\end{proof}
 
\medskip
\noindent
{\it Proof of Theorem \ref{thm:estimate}.}
Using the partition of unity in Proposition \ref{prop:partition}, we can write the integral as a sum of 
$$
\int_{\CT^d} P_n^{(\a,\b)}(x_1 t_0 + x_2 t_1 + \cdots + x_d t_{d-1}) \xi_i(t) \varpi(t) (t_0 t_1 \cdots t_{d-1})^{k-1}\dd t 
$$
for $i =0,1,\ldots, d-1$. Hence, we only need to estimate the above integral for each $i$. For $i =0$, this 
is precisely the estimate carried out in Lemma \ref{lem:main} with $\xi(t) = \xi_0(t) \varpi(t)$. By the symmetry of 
$\CT_d$ and the integral, for each $i \ne 0$, we can exchange $t_i$ and $t_0$, so that the same estimate 
applies. This completes the proof. 
\qed

\end{document}